\theoremstyle:=definition,remark,plain\do{%
	\expandafter\g@addto@macro\csname th@\theoremstyle\endcsname{%
		\addtolength\thm@preskip\parskip
	}%
}
\newtheorem{theorem}{Theorem}[section]
\newtheorem{conjecture}[theorem]{Conjecture}
\newtheorem{corollary}[theorem]{Corollary}
\newtheorem{definition}[theorem]{Definition}
\newtheorem{example}[theorem]{Example}
\newtheorem{lemma}[theorem]{Lemma}
\newtheorem{proposition}[theorem]{Proposition}
\newtheorem{remark}[theorem]{Remark}
\DeclareMathOperator{\D}{D}
\DeclareMathOperator{\Ker}{Ker}
\newcommand{\field}[1]{\mathbb{#1}}
\DeclareMathOperator{\Imaginary}{Im}
\renewcommand{\Im}{\Imaginary}
\title{Infinitesimal conformal deformations of triangulated surfaces in space}
\author{Wai Yeung Lam        \and
	Ulrich Pinkall
}
\address{W. Y. Lam \\
	Department of Mathematics, Brown University, Providence, RI 02912}
\address{
	U. Pinkall \\
	Institut f\"ur Mathematik, Technische Universit\"{a}t Berlin, Stra{\ss}e des 17.\ Juni 136, 10623 Berlin, Germany 
}
	\email{lam@math.brown.edu, pinkall@math.tu-berlin.de}
\begin{document}
	
\begin{abstract}
We study infinitesimal conformal deformations of a triangulated surface in Euclidean space and investigate the change in its extrinsic geometry. A deformation of vertices is conformal if it preserves length cross-ratios. On one hand, conformal deformations generalize deformations preserving edge lengths. On the other hand, there is a one-to-one correspondence between infinitesimal conformal deformations in space and infinitesimal isometric deformations of the stereographic image on the sphere. The space of infinitesimal conformal deformations can be parametrized in terms of the change in dihedral angles, which is closely related to the Schl\"{a}fli formula. 
\keywords{Discrete conformality \and Infinitesimal rigidity \and Schl\"{a}fli formula}
\end{abstract}

\thanks{This research was supported by the DFG Collaborative Research Centre SFB/TRR 109 \emph{Discretization in Geometry and Dynamics}.}

\maketitle

\section{Introduction}
\label{intro}

Realizing a triangulated surface in Euclidean space with prescribed edge lengths is a classical problem in rigidity theory \cite{Whiteley2014}. Fixing a combinatorial structure and a discrete metric, one is interested in determining the existence and uniqueness of the realization, which is analogous to the problem of isometric immersions of surfaces in differential geometry. It stimulates various directions of research, such as infinitesimal rigidity. A triangulated surface in space is infinitesimally rigid if all its first-order isometric deformations are induced by Euclidean motions. Dehn's rigidity theorem \cite{Dehn1916} states that all convex polyhedra are infinitesimally rigid. Gluck \cite{Gluck1975} further showed that generic triangulated spheres are infinitesimally rigid.

Rather than insisting on edges lengths, we are interested in infinitesimal deformations preserving conformal structures.

The concept of discrete conformality arose from William Thurston's idea to approximate conformal maps by circle packings in the plane \cite{Stephenson2005}. Rodin and Sullivan \cite{Rodin1987} proved the convergence of the analogue of Riemann maps for circle packings. There are further extensions where circles intersect each other \cite[Chap. 13]{Thurston1982}, such as Schramm's orthogonal circle patterns \cite{Schramm1997}. The intersection angles of circles yield a discrete notion of conformal structure which is well defined in M\"{o}bius geometry since M\"{o}bius transformations map circles to circles and preserve their intersection angles. 

For a given triangulated surface in space, one might be tempted to define its conformal structure in terms of the intersection angles of the circumscribed circles. However, deformations preserving edge lengths generally do not preserve the intersection angles because of the change in dihedral angles. This phenomenon is dissatisfying since it is inconsistent with the smooth theory, where isometric deformations are special cases of conformal deformations. Instead, Bobenko and Schr\"{o}der \cite{Bobenko2005a} related the intersection angles to the Willmore energy . On the other hand, in an effort to remedy the problem, one might measure the intersection angles after flattening neighboring triangles into the plane so as to remove the dependence on dihedral angles \cite{Kharevych:2006}. Nevertheless, the angles measured in this way are not invariant under M\"{o}bius transformations.

We consider another notion of discrete conformality that is invariant under M\"{o}bius transformations -- length cross ratios. It was proposed first in terms of vertex scaling by Luo and later written in the form of cross ratios \cite{Williams1984,Luo2004,Springborn2008}. 
\begin{definition}\label{def:confequi}
	The \textbf{length cross ratio} $\mbox{lcr}: E_{int} \to \field{R}$ of a triangulated surface equipped with a discrete metric $\ell: E \to \field{R}_{>0}$ is
	\[
	\mbox{lcr}_{ij} := \frac{\ell_{il}\ell_{jk}}{\ell_{lj}\ell_{ki}} =\mbox{lcr}_{ji}
	\]
	where $\{ijk\},\{ilj\}$ are the left and the right triangles of the edge oriented from vertex $i$ to $j$. Two discrete metrics are \textbf{conformally equivalent} if their length cross ratios are identical. Equivalently, two metrics $\ell,\tilde{\ell}$ are conformally equivalent if they differ by \textbf{vertex scaling}, i.e. there exists a \textbf{scale factor} $u:V \to \mathbb{R}$ such that
	\[
	\tilde{\ell}_{ij} = e^{\frac{u_i+u_j}{2}} \ell_{ij}.
	\] 
\end{definition}

Vertex scaling mimics the smooth theory that two Riemannian metrics $g,\tilde{g}$ are conformally equivalent if there exists $u$ such that $\tilde{g}=e^{u} g$.

Length cross ratio theory is a counterpart of circle patterns in the plane. For a triangle mesh in the plane, a complex number is associated to each interior edge by taking the cross-ratio of the four vertices of the adjacent triangles. The magnitude of the cross-ratio is the length cross-ratio. The other half of the cross-ratio, namely the argument, yields the intersection angle of the circumscribed circles. In the plane, the infinitesimal deformations of the two types are simply related by a $\pi/2$-rotation \cite{Lam2016a}.

Previous study of length cross ratios is restricted to intrinsic geometry, i.e. only referring to discrete metrics but not realizations in $\mathbb{R}^3$. Luo \cite{Luo2004} introduced vertex scaling to study combinatorial Yamabe flow. Bobenko, Pinkall and Springborn \cite{Bobenko2010} further established its relation to ideal hyperbolic polyhedra.  

Conformal deformations in space are interesting not only from the theoretical point of view but also for applications. Numerical approximations for conformal deformations of smooth surfaces have been obtained by directly discretizing equations from the smooth theory. Gu and Yau \cite{Gu2003} studied conformal parametrizations of triangulated surfaces.  Conformal deformations with respect to extrinsic geometry were also considered numerically \cite{Crane2011}. 

In this paper, we focus on infinitesimal conformal deformations of triangulated surfaces in space with respect to length cross ratio theory. It turns out in the context of the extrinsic geometry, the length cross ratio is a better notion of discrete conformality. One evidence is that the theory of length cross ratios is compatible with isometric deformations as expected in the smooth theory.

\begin{proposition}\label{thm:isoconf}
	Given a non-degenerate realization $f:V \to \field{R}^n$ of a triangulated surface, the space of infinitesimal conformal deformations of $f$ in $\field{R}^n$ is isomorphic to the space of infinitesimal isometric deformations of $\iota \circ \Phi \circ f$ in $\field{R}^{n+1}$. Here  $\Phi: \field{R}^n \to S^n $ is the stereographic projection and $\iota: S^{n} \to \mathbb{R}^{n+1}$ is the inclusion map.
\end{proposition}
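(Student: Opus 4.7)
The plan is to construct an explicit linear isomorphism using the conformality of stereographic projection together with a tangential/normal decomposition of the deformation vectors on the sphere.

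First I would record the classical formula
\[
|\Phi(p)-\Phi(q)|^2 \;=\; \frac{4|p-q|^2}{(1+|p|^2)(1+|q|^2)}.
\]
Setting $u_i := \log\frac{2}{1+|f_i|^2}$, the chordal lengths $\ell'_{ij} := |\Phi(f_i)-\Phi(f_j)|$ satisfy $\ell'_{ij} = e^{(u_i+u_j)/2}\ell_{ij}$, so the spherical edge metric is already a vertex-scaling of the original planar metric. This identity is the bridge between the two deformation theories.

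Next I analyze an infinitesimal isometric deformation $\dot g\colon V\to\field{R}^{n+1}$ of $g := \iota\circ\Phi\circ f$. At each vertex I decompose $\dot g_i = \dot g_i^T + c_i\,g_i$ with $\dot g_i^T\in T_{g_i}S^n$, and set $\dot f_i := d\Phi^{-1}_{g_i}(\dot g_i^T)$. Using $|g_i-g_j|^2 = 2(1-\langle g_i,g_j\rangle)$, the isometry condition $\langle g_i-g_j,\dot g_i-\dot g_j\rangle=0$ decomposes as
\[
\langle g_i-g_j,\,\dot g_i^T-\dot g_j^T\rangle \;=\; -\tfrac{c_i+c_j}{2}\,(\ell'_{ij})^2.
\]
On the other hand, differentiating $\ell'_{ij} = e^{(u_i+u_j)/2}\ell_{ij}$ along the deformation induced by $\dot f$ (for which $\dot u_i = -e^{u_i}\langle f_i,\dot f_i\rangle$) rewrites the left-hand side as $(\ell'_{ij})^2\bigl(\tfrac{\dot u_i+\dot u_j}{2}+\tfrac{\dot\ell_{ij}}{\ell_{ij}}\bigr)$. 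Comparing the two expressions yields
\[
\frac{\dot\ell_{ij}}{\ell_{ij}} \;=\; -\frac{(\dot u_i+c_i)+(\dot u_j+c_j)}{2},
\]
which is exactly the statement that $\dot f$ is an infinitesimal conformal deformation with scale factor $\dot v_i := -(\dot u_i+c_i)$.

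For the inverse direction, given an infinitesimal conformal deformation $\dot f$ of $f$, on a connected triangulated surface the scale factor $\dot v$ is uniquely recovered from the three linear equations on any single triangle. Setting $c_i := -\dot v_i - \dot u_i(\dot f)$ and $\dot g_i := d\Phi_{f_i}(\dot f_i) + c_i\,g_i$ then produces the corresponding infinitesimal isometry, and the same computation in reverse shows that the two constructions are mutually inverse linear maps. The non-degeneracy hypothesis is used both to invert $d\Phi$ and to solve uniquely for $\dot v$ on each non-degenerate triangle. The main obstacle I anticipate is the bookkeeping around the normal components $c_i\,g_i$: they have no direct counterpart in $\field{R}^n$, and the substance of the proof is to show that this normal freedom is exactly absorbed by the vertex-scaling freedom in the conformal deformation on $\field{R}^n$. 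Once that algebraic matching is in place, the rest is just linearity.
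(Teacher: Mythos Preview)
Your argument is correct and is essentially the same approach as the paper's: both hinge on the tangential/normal decomposition $\dot g_i = \dot g_i^T + c_i g_i$ on the sphere and on the fact that stereographic projection is a vertex scaling. The paper packages this a bit more modularly---first proving that for an inscribed surface the tangential projection gives an isomorphism between ambient isometric deformations and conformal deformations tangent to $S^n$ (your computation $\langle g_i-g_j,\dot g_i^T-\dot g_j^T\rangle=-\tfrac{c_i+c_j}{2}|g_i-g_j|^2$), and then invoking a separate lemma that $d\Phi$ carries infinitesimal conformal deformations to infinitesimal conformal deformations for any M\"obius $\Phi$---whereas you fold both steps into a single explicit calculation via the chord-length formula and its derivative.
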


We then study infinitesimal conformal deformations in $\field{R}^3$. Infinitesimal conformal deformations in $\mathbb{R}^3$ can be parametrized by scale factor $u$. These parameters are \emph{intrinsic} since they describe the change in the discrete metrics.

\begin{proposition}\label{prop:givenu}
	Given an infinitesimally rigid triangulated sphere in $\mathbb{R}^3$ and a function $u:V \to \mathbb{R}$, there exists an infinitesimal conformal deformation unique up to Euclidean motions with scale factor $u$.
\end{proposition}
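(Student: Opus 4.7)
The plan is to recast the statement as a linear algebra problem. An infinitesimal deformation $\dot f : V \to \mathbb{R}^3$ changes each edge length at the logarithmic rate
\[
\frac{\dot \ell_{ij}}{\ell_{ij}} \;=\; \frac{\langle \dot f_j - \dot f_i,\, f_j - f_i\rangle}{|f_j - f_i|^2},
\]
and, directly from \cref{def:confequi} differentiated at $t=0$, the deformation is infinitesimally conformal with scale factor $u$ exactly when this quantity equals $(u_i + u_j)/2$ on every edge $ij \in E$. Introduce the two linear maps
\[
\Lambda : \mathbb{R}^{3V} \longrightarrow \mathbb{R}^{E}, \qquad (\Lambda \dot f)_{ij} \;=\; \frac{\langle \dot f_j - \dot f_i,\, f_j - f_i\rangle}{|f_j - f_i|^2},
\]
\[
S : \mathbb{R}^{V} \longrightarrow \mathbb{R}^{E}, \qquad (Su)_{ij} \;=\; \tfrac{u_i + u_j}{2}.
\]
The infinitesimal conformal deformations with scale factor $u$ are precisely the affine fiber $\Lambda^{-1}(Su)$, so both existence and uniqueness reduce to controlling $\Lambda$.

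Next I would carry out a dimension count. The kernel of $\Lambda$ is, by definition, the space of infinitesimal isometric deformations of $f$. Infinitesimal rigidity says this kernel coincides with the space of infinitesimal Euclidean motions, which is $6$-dimensional provided the realization is non-degenerate (i.e.\ the vertices do not all lie on a single line). Hence $\mathrm{rank}(\Lambda) = 3V - 6$. Because $f$ realizes a triangulated sphere, Euler's formula $V - E + F = 2$ together with $3F = 2E$ gives $E = 3V - 6$. Therefore $\Lambda : \mathbb{R}^{3V} \to \mathbb{R}^{E}$ is surjective.

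Surjectivity immediately finishes the proof: for any prescribed $u \in \mathbb{R}^{V}$ the vector $Su \in \mathbb{R}^{E}$ lies in the image of $\Lambda$, so the linear system $\Lambda \dot f = Su$ admits a solution $\dot f$, giving the desired infinitesimal conformal deformation with that scale factor. Any other solution $\dot f'$ satisfies $\Lambda(\dot f' - \dot f) = 0$, so $\dot f' - \dot f \in \ker \Lambda$ is an infinitesimal Euclidean motion, which yields uniqueness modulo such motions.

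There is no serious obstacle here; the argument is essentially a clean rank/nullity count. The only delicate ingredient is the identification of $\ker \Lambda$ with infinitesimal Euclidean motions, which is exactly the rigidity hypothesis, and the matching of dimensions $\mathrm{rank}(\Lambda) = E$ uses the sphere topology in an essential way. In particular, the same strategy would fail for surfaces of positive genus, where $E > 3V - 6$ and one would expect $\Lambda$ to have cokernel, forcing nontrivial compatibility conditions on $u$.
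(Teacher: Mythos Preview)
Your proof is correct and rests on the same rank--nullity idea as the paper's, but you apply it to a different linear map. The paper works with the map $T:C\to\mathbb{R}^{|V|}$ from the space $C$ of infinitesimal conformal deformations to scale factors; to show $T$ is surjective it first needs the lower bound $\dim C\ge |V|+6$, which in turn relies on the cited fact that the space of conformal equivalence classes of metrics has dimension $|E|-|V|$. You instead show directly that the length-change map $\Lambda:\mathbb{R}^{3V}\to\mathbb{R}^{E}$ is surjective, using only Euler's formula $E=3V-6$ and the rigidity hypothesis. This is a cleaner and more self-contained route: it avoids importing the result on conformal equivalence classes, and as you note it actually proves something stronger---that \emph{any} prescribed logarithmic edge-length change is realizable, not just those of the form $Su$. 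The trade-off is that the paper's framing makes the role of the conformal constraint count explicit, which connects more naturally to the later discussion of isothermic surfaces and the inequality $\dim C\ge |V|+6-6g$ in higher genus.
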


Infinitesimal conformal deformations can also be parametrized by dihedral angles between face normals, analogous to the mean curvature in the smooth theory. These parameters are \emph{extrinsic}. Under an infinitesimal conformal deformation, we consider the \textbf{change in mean curvature half-density} $\rho:V_{int} \to  \field{R}$ defined on interior vertices,
\[
\rho_i = \frac{1}{2}\sum_j \dot{\alpha}_{ij}|f_j-f_i|.
\]
where $\dot{\alpha}$ denotes the change in dihedral angles. Together with the discrete Dirac operator $\D$ (Definition \ref{def:dirac}), we prove the following:
\begin{theorem}[Conformal deformations with prescribed change in mean curvature half density] \label{thm:givenrho}
	Suppose a closed triangulated sphere does not possess any non-trivial infinitesimal conformal deformation with vanishing change in mean curvature half density, i.e. we have $\dim \Ker D =4$. Then, given any $\rho:V \to \field{R}$ with $\sum_i \rho_i=0$, there exists an infinitesimal conformal deformation with $\rho$ as the change in mean curvature half density. The deformation is unique up to a similarity transformation.
\end{theorem}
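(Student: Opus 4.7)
The plan is to realize the map ``infinitesimal conformal deformation $\mapsto$ change in mean curvature half-density'' as a linear map $\Psi$ between two finite-dimensional real vector spaces of the same dimension, and then deduce bijectivity on the nose from the injectivity built into the hypothesis $\dim \Ker \D = 4$.

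\textbf{Step 1: Necessity of $\sum_i \rho_i = 0$.} First I would show that this constraint is automatic for every infinitesimal conformal deformation. Swapping the order of summation,
\[
\sum_{i\in V}\rho_i \;=\; \tfrac{1}{2}\sum_{i}\sum_{j\sim i}\dot\alpha_{ij}\,\ell_{ij} \;=\; \sum_{\{i,j\}\in E}\dot\alpha_{ij}\,\ell_{ij},
\]
and the right-hand side should vanish by a discrete version of the classical Euclidean Schl\"afli identity alluded to in the abstract. I would argue this by applying Schl\"afli locally (to each flat tetrahedron built on a triangle and an apex, or by an edgewise telescoping argument) and using closedness of the sphere to kill the boundary terms. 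This is the step that genuinely uses that the deformation is conformal rather than arbitrary.

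\textbf{Step 2: Matching dimensions.} By Proposition \ref{prop:givenu}, the space of infinitesimal conformal deformations modulo Euclidean motions is parametrized by the scale factor $u: V\to\mathbb R$, hence has dimension $|V|$. Adding back the six-dimensional group of Euclidean motions and then quotienting by the one extra dimension of dilations, the space of infinitesimal conformal deformations modulo similarities has dimension $|V|-1$. This exactly matches the dimension of the target space $\{\rho:V\to\mathbb R \mid \sum_i\rho_i = 0\}$.

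\textbf{Step 3: Injectivity modulo similarities, and conclusion.} Similarity transformations preserve dihedral angles to first order, so they automatically lie in $\Ker\Psi$; the hypothesis $\dim\Ker\D = 4$ is precisely the statement that nothing else does, i.e.\ $\Ker\Psi$ coincides with the space of similarity-induced deformations. Hence $\Psi$ descends to an injective linear map
\[
\bar\Psi:\ \{\text{infinitesimal conformal deformations}\}/\text{similarities}\ \longrightarrow\ \Bigl\{\rho:V\to\mathbb R \,\Bigm|\, \sum_i\rho_i=0\Bigr\}.
\]
Since source and target are finite-dimensional of equal dimension $|V|-1$, $\bar\Psi$ is an isomorphism, giving both existence for any prescribed admissible $\rho$ and uniqueness up to similarity.

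\textbf{Main obstacle.} I expect Step~1 to be the hard part: establishing the discrete Schl\"afli-type identity $\sum_{E}\ell_{ij}\dot\alpha_{ij} = 0$ for length-cross-ratio-preserving deformations is nontrivial because edge lengths are themselves changing, so one cannot simply quote the classical formula for length-preserving deformations; I would look for a proof that packages the vertex-star contributions into a form which vanishes after summing over a closed surface. Step~3 also requires that the definition of $\D$ in Definition \ref{def:dirac} really identifies $\Ker \D$ (when four-dimensional) with the similarity-induced deformations; I would check this against the quaternionic setup underlying $\D$ before invoking it.
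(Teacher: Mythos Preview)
Your dimension-count strategy is a valid alternative to the paper's argument, but Step~2 as written has a gap: you invoke Proposition~\ref{prop:givenu} to conclude that the space $C$ of infinitesimal conformal deformations has dimension exactly $|V|+6$, yet that proposition assumes infinitesimal \emph{rigidity}, whereas the hypothesis here is only $\dim\Ker\D=4$. These are not the same condition --- a sphere could in principle carry a non-Euclidean infinitesimal isometric deformation whose change in mean curvature half-density $\rho$ is nonzero, making it flexible yet still satisfying $\dim\Ker\D=4$. The fix is easy: you only need the lower bound $\dim C \geq |V|+6$, which is the raw count $3|V|-(|E|-|V|)$ valid for any triangulated sphere. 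Combined with $\dim\Ker\Psi = 7$ from Step~3 and $\Im\Psi \subseteq \{\sum\rho_i=0\}$ from Step~1, this forces $\dim\Im\Psi \geq |V|-1$ and hence equality, giving surjectivity; the equality $\dim C = |V|+6$ then drops out as a consequence rather than an input.

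The paper takes a structurally different route. It computes the adjoint $\D^*$, observes that $\D$ acts between spaces of equal dimension $|V|+3|F|=|V|+2|E|$ so that $\dim\Ker\D = \dim\Ker\D^*$, and checks directly that the four-dimensional space of constants always lies in $\Ker\D^*$. The hypothesis $\dim\Ker\D = 4$ then pins $\Ker\D^*$ down to exactly the constants, whence $\Im\D = (\Ker\D^*)^\perp$ is cut out by $\sum_i\rho_i = 0$ and $\sum_{ij} U_{ij} = 0$; in particular $(\rho,0)\in\Im\D$ whenever $\sum_i\rho_i=0$, producing $(u,Z)$ which Theorem~\ref{thm:uZ} integrates to $\dot f$. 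This adjoint approach bypasses $\dim C$ entirely and yields the explicit description of $\Im\D$ that is reused for higher genus in Theorem~\ref{thm:highgendis}.

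Two smaller corrections. Your worry about Step~1 is misplaced: the Schl\"afli identity $\sum_{ij\in E}\ell_{ij}\dot\alpha_{ij}=0$ holds for \emph{every} infinitesimal deformation of a closed triangulated surface, not only conformal ones, and the paper's one-line proof (Corollary~\ref{prop:schafli}) via the face-rotation field $Z$ does not use conformality in any essential way. And for Step~3, the identification of $\Ker\Psi$ modulo translations with $\Ker\D$ that you flag as needing verification is exactly the content of Theorem~\ref{thm:uZ}, which sets up the bijection between conformal deformations modulo translation and pairs $(u,Z)$ whose $U$-component under $\D$ vanishes.
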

Here the condition $\sum_i \rho_i=0$ is a consequence of the Schl\"{a}fli formula (Proposition \ref{prop:schafli}) and the assumption $\dim \Ker D =4$ is analogous to infinitesimal rigidity in Proposition \ref{prop:givenu}.

It has been shown \cite{Lam2016a} that infinitesimal conformal deformations of triangulated disks in the plane are closely related to discrete complex analysis. Each infinitesimal conformal deformation corresponds to a discrete harmonic function with respect to the cotangent Laplacian (Corollary \ref{cor:planarconf}). The study of the planar case has led to a unified theory of discrete minimal surfaces \cite{Lam2016b,Lam2017}.

Combining with previous results \cite{Lam2016}, we obtain an analogy between isometric deformations and conformal deformations (Table \ref{tab:1}). Euclidean motions induce trivial isometric deformations. The class of infinitesimally flexible surfaces is preserved under projective transformations. In contrast, M\"{o}bius transformations induce trivial conformal deformations. Earlier, we have studied ``conformally flexible" surfaces and called them \emph{isothermic triangulated surfaces} \cite{Lam2016}. More precisely, a closed triangulated surface in space is isothermic if its space of infinitesimal conformal deformations has dimension strictly larger than $V+6-6g$. The class of isothermic triangulated surfaces is preserved under M\"{o}bius transformations.

\begin{table}[h]
	\caption{Comparison between infinitesimal isometric and conformal deformations} 	\label{tab:1}
	\begin{tabular}{ c| c| c } 
		Types of  & \textbf{Isometric} & \textbf{Conformal}  \\
		infinitesimal deformations: & & \\ \hline 
		Constraints: & Edge lengths & Length cross-ratios \\  
		Trivial deformations: & Euclidean transformations & M\"{o}bius transformations \\
		Singularity: & Infinitesimally flexible surfaces & Isothermic surfaces \\
		Bijective under: & Projective transformations & M\"{o}bius transformations
	\end{tabular}
\end{table}

Our approach is motivated by the method of quaternionic analysis in the smooth theory, which relates conformal deformations to mean curvature \cite{Pedit1998}. In the smooth theory, a pair of non-congruent surfaces is a Bonnet pair if they are isometric with identical mean curvature. Using quaternionic analysis, there is an elegant way to obtain Bonnet pairs from an isothermic surface \cite{Kamberov1998}.

In Section \ref{sec:3} we review the theory of length cross-ratios. We then prove Proposition \ref{thm:isoconf} in Section \ref{sec:isoconfomal} and Proposition \ref{prop:givenu} in Section \ref{sec:3}. In section \ref{sec:5}, we develop the main theorem \ref{thm:uZ} that describes infinitesimal conformal deformations. An immediate corollary is the angular velocity equation in Section \ref{sec:6}. In order to relate infinitesimal conformal deformations to the change in mean curvature half-density as in Theorem \ref{thm:givenrho}, a discrete Dirac operator is introduced in Section \ref{sec:8}. Examples are given in Section \ref{sec:9}. In Section \ref{sec:infinhighdisc}, we extend the results to surfaces of high genus. Finally, the relation to isothermic surfaces and open problems are discussed in Section \ref{sec:10}.

\section{Notation} \label{sec:2}
\begin{definition}
	A triangulated surface $M=(V,E,F)$ is a finite simplicial complex whose underlying topological space is an oriented 2-manifold with or without boundary. The set of vertices (0-cells), edges (1-cells) and triangles (2-cells) are denoted as $V$, $E$ and $F$. 
\end{definition}

\begin{definition}
	A realization of a triangulated surface in $\field{R}^n$ is a map $f:V \to \field{R}^n$ which can be extended linearly to each face. We say $f$ is \emph{non-degenerate} if every face of $f$ spans an affine 2-plane. In particular it implies $f_i \neq f_j$ for every edge $\{ij\}\in E$.
\end{definition}

\begin{definition}
	A \emph{discrete metric} on a triangulated surface is a function $\ell: E \to \field{R}_{>0}$ satisfying the triangle inequality. Two discrete metrics on a triangulated surface are \emph{isometric} if they are identical.
\end{definition}

We are interested in discrete metrics induced from realizations into $\field{R}^n$.

\begin{definition}
	Every  realization $f:V \to \field{R}^n$ of a triangulated surface induces a discrete metric $\ell:E \to \field{R}_{>0}$
	\[
	\ell_{ij} = |f_j - f_i|  \quad \forall\, \{ij\} \in E.
	\]
	where $|\cdot |$ is the Euclidean norm. Two  realizations are conformally equivalent if their induced edge lengths are conformally equivalent.
\end{definition}

Without further notice all triangulated surfaces under consideration are assumed to be oriented and the realizations are non-degenerate. Each triangular face is represented by its vertices in an order that is compatible with the orientation of the surface. For example, in Fig. \ref{fig:leftright}, the left face is represented as $\{ijk\}=\{jki\}=\{kij\}$. A vector field $Z:F \to \mathbb{R}^3$ on faces obeys the same rule: $Z_{ijk}=Z_{jki}=Z_{kij}$. The opposite orientation on faces will not be considered in this paper.

An \emph{interior edge} is a common edge of two faces. We denote $V_{int}$ and $E_{int}$ the set of interior vertices and the set of interior edges respectively. We write $e_{ij}$ as the oriented edge from the vertex $i$ to the vertex $j$. Note that $e_{ij} \neq e_{ji}$. The set of oriented edges is denoted by $\vv{E}$. The set of interior oriented edges is indicated by $\vv{E}_{int}$.

We write $M^*$ as the \emph{dual mesh} of $M$ constructed as follows: A vertex of $M^*$ is associated to every face of $M$. Two vertices of $M^*$ are connected by an edge if the corresponding faces of $M$ share a common edge. Edges of $M^*$ bound a face if the corresponding edges are exactly the neighbors of some vertex of $M$. 

We make use of discrete differential forms from Discrete Exterior Calculus \cite{Desbrun2006a}. A function $\omega:\vv{E}\to \field{R}$ is called a (primal) \textbf{discrete 1-form} if
\[
\omega(e_{ij}) = -\omega(e_{ji}) \quad \forall e_{ij} \in \vv{E}.
\]
It is \textbf{closed} if for every face $\{ijk\}$	
\[
\omega(e_{ij})+\omega(e_{jk})+\omega(e_{ki})=0.
\]
It is \textbf{exact} if there exists a function $f:V \to \field{R}$ such that for $e_{ij} \in \vv{E}$
\[
\omega(e_{ij}) =  f_j -f_i =: df(e_{ij}).
\]
In particular, every exact 1-form is closed.

Similarly, we consider discrete 1-forms on the dual mesh $M^*$ and these are called \textbf{dual 1-forms} on $M$. For every oriented edge $e$, we write $e^*$ as its dual edge oriented from the right face of $e$ to its left face. A function $\eta:\vv{E}^*_{int} \to \field{R}$ defined on oriented dual edges is called a dual 1-form if
\[
\eta(e^*_{ij}) = -\eta(e^*_{ji}) \quad \forall e^*_{ij} \in \vv{E}^*_{int}.
\]
A dual 1-form $\eta$ is \emph{closed} if for ever interior vertex $i \in V_{int}$ 
\[
\sum_j \eta(e^*_{ij}) = 0.
\] 
It is exact if there exists $h:F \to \field{R}$ such that
\[
dh(e^*_{ij}):= h_{ijk}-h_{ilj} = \eta(e^*_{ij})
\] 
where $\{ijk\}$ denotes the left face of $e_{ij}$ and $\{ilj\}$ denotes the right face (Figure \ref{fig:leftright}).

\begin{figure}
	\centering
	\includegraphics[width=0.35\textwidth]{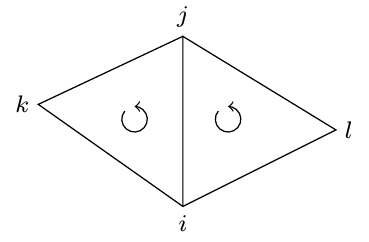}
	\caption{Two neighboring triangles contain the oriented edge $e_{ij}$}
	\label{fig:leftright}
\end{figure}

Given a realization $f:V \to \field{R}^3$ of a triangulated surface, we measure its dihedral angles $\alpha: E_{int} \to \field{R}$ between face normals. We denote $N:F \to \field{S}^2$ the \textbf{face normal} compatible with the orientation of the surface
\[
N_{ijk}:= \frac{(f_j-f_i)\times (f_k-f_i)}{|(f_j-f_i)\times (f_k-f_i)|} = N_{jki}=N_{kij}
\]
where $ijk$ is in the order consistent with the orientation. In principle $N_{ikj}=-N_{ijk}$ since $ikl$ has the opposite orientation, but this case will not appear in the paper.

The sign of the \textbf{dihedral angle} $\alpha_{ij} \in (-\pi,\pi)$ is determined by
\begin{align*}
	\sin \alpha_{ij} &= \langle N_{ijk} \times N_{ilj}, \frac{f_j-f_i}{|f_j-f_i|} \rangle, \\
	\cos  \alpha_{ij} &= \langle N_{ijk},N_{ilj} \rangle 
\end{align*}
where $\{ijk\},\{ilj\}\in F$ denote the left and the right face of $e_{ij}$.

\section{Conformal equivalence}\label{sec:3}

This section reviews the definition of the conformal equivalence of triangulated surfaces based on length cross ratios \cite{Luo2004,Springborn2008}, which possesses properties as in the smooth theory. Every immersion of a triangulated surface into Euclidean space induces a conformal structure. The conformal structure is preserved under M\"{o}bius transformations.


\begin{proposition}
	Suppose $f:V\to \field{R}^n$ is a  realization of a triangulated surface. Then for any M\"{o}bius transformation $\Phi$, the realizations $f$ and $\Phi \circ f$ are conformally equivalent.
\end{proposition}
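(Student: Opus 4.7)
The plan is to reduce the statement to a single elementary identity about inversions by using the structure of the Möbius group, and then appeal to the vertex-scaling characterization of conformal equivalence given in Definition \ref{def:confequi}.

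First, I would decompose any Möbius transformation $\Phi$ of $\mathbb{R}^n \cup \{\infty\}$ into a product of similarities (translations, rotations, dilations) and inversions in spheres. Similarities act on edge lengths by a global constant $\lambda>0$, so $\tilde\ell_{ij}=\lambda\,\ell_{ij}$, which is clearly a vertex scaling with $u_i\equiv 2\log\lambda$; in particular length cross-ratios are preserved (the factors cancel in pairs in numerator and denominator). Thus it suffices to treat the case where $\Phi$ is a single inversion, which up to a preceding translation we may take to be the standard inversion $\Phi(x)=x/|x|^2$ centered at the origin.

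The key computation is the classical identity
\[
|\Phi(p)-\Phi(q)| \;=\; \frac{|p-q|}{|p|\,|q|},
\]
valid for all $p,q\in\mathbb{R}^n\setminus\{0\}$, which follows from expanding $|p/|p|^2-q/|q|^2|^2$. Applied to each edge $\{ij\}$ of a realization $f$ whose vertices avoid the center of inversion, this gives
\[
\tilde\ell_{ij} \;=\; |\Phi(f_i)-\Phi(f_j)| \;=\; |f_i|^{-1}\,|f_j|^{-1}\,\ell_{ij},
\]
so with the choice $u_i := -2\log|f_i|$ we have exactly $\tilde\ell_{ij}=e^{(u_i+u_j)/2}\ell_{ij}$. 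By Definition \ref{def:confequi}, this is a vertex scaling, hence $f$ and $\Phi\circ f$ are conformally equivalent.

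The only step that requires any care is verifying the distance formula for the inversion; everything else is bookkeeping with exponents. I would organize the write-up as: (1) reduce to inversions via the Möbius group generators; (2) state and verify the distance identity; (3) read off the scale factor $u_i=-2\log|f_i|$ and invoke the definition. The assumption that $\Phi\circ f$ is itself a (non-degenerate) realization, i.e.\ that no vertex sits at the center of inversion, is implicit in the statement and needs no further comment.
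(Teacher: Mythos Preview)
Your proposal is correct and follows essentially the same approach as the paper: reduce to the generators of the M\"{o}bius group, dispose of similarities trivially, and for the inversion $\Phi(x)=x/|x|^2$ verify the distance identity $|\Phi(p)-\Phi(q)|=|p-q|/(|p|\,|q|)$, which exhibits the new edge lengths as a vertex scaling of the old ones. The paper's proof is slightly terser (it does not name the scale factor $u_i=-2\log|f_i|$ explicitly), but the argument is the same.
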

\begin{proof}
	M\"{o}bius transformations are generated by translations, dilations and the inversion under the unit sphere. Conformal equivalence is preserved obviously under dilations and translations. Thus, it suffices to consider $\phi$ as the inversion under the unit sphere. Since
	\begin{align*}
		| \frac{f_i}{|f_i|^2}-\frac{f_j}{|f_j|^2}|^2 &= \frac{1}{|f_i|^2} + \frac{1}{|f_j|^2}- \frac{2}{|f_i|^2 |f_j|^2} \langle f_i,f_j \rangle \\
		&= \frac{1}{|f_i|^2 |f_j|^2} |f_i - f_j|^2
	\end{align*}
	where $|\cdot |$ denotes the Euclidean norm, we conclude $f$ and $\phi \circ f$ are conformally equivalent.  
\end{proof}

We consider infinitesimal conformal deformations, as a linearization of Definition \ref{def:confequi}.
\begin{definition}
	Suppose $f:V \to \field{R}^n$ is a  realization of a triangulated surface. An infinitesimal deformation $\dot{f}:V \to \field{R}^n$ is \textbf{conformal} if it preserves the length cross ratios. Equivalently, $\dot{f}$ is conformal if there exists $u:V \to \field{R}$ satisfying
	\[
	\langle \dot{f}_j - \dot{f}_i, f_j - f_i \rangle = \frac{u_j + u_i}{2} |f_j - f_i|^2.
	\] 
	We call $u$ the \textbf{scale factor} of $\dot{f}$. In particular, $\dot{f}$ is isometric if $u \equiv 0$.
\end{definition}


As a remark, the scale factor $u$ is a good \emph{intrinsic} parameter to describe conformal deformations for triangulated spheres in $\field{R}^3$.

\begin{proof}[Proof of Proposition \ref{prop:givenu}]
	It is known that as a result of Definition \ref{def:confequi} the space of conformal equivalence class is of dimension $|E|-|V|$ intrinsically, i.e. independent of the immersion \cite{Bobenko2010} \cite[Lemma 18]{Lam2016}. It implies that for a triangulated sphere in $\field{R}^3$, the space of infinitesimal conformal deformations $C$, including Euclidean motions, is of dimension at least $3|V|- (|E|-|V|)=|V|+6$. The inequality is strict if the constraints become linearly dependent and this property depends on the immersion. In the following, we argue that it is indeed an equality if we assume $f$ is infinitesimally rigid.
	
	The map from the space of infinitesimal conformal deformations in $\mathbb{R}^3$ to the space of scale factors $u$ is a linear map $T:C \to \field{R}^{|V|}$.
	
	If the surface is infinitesimally rigid, then $\Ker(T)$ consists of Euclidean motions only and is of dimension $6$. It implies $\mbox{rank} \,T = |V|$ and $\dim C = |V| + 6$. In particular, $T$ is surjective. Thus, given any $u: V \to \mathbb{R}$, there exists an infinitesimal conformal deformation in $\mathbb{R}^3$ with scale factor $u$ and it is unique up to Euclidean motions.  
\end{proof}

\begin{proposition}[Gluck \cite{Gluck1975}] \label{prop:gluck}
	Almost all simply connected closed triangulated surfaces in $\mathbb{R}^3$ are infinitesimally rigid.
\end{proposition}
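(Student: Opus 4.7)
The plan is to recognize infinitesimal rigidity as a Zariski-open condition on the space of realizations and then exhibit a single rigid representative for each combinatorial type. Fix the combinatorial structure of a triangulated sphere $M=(V,E,F)$. For a realization $f:V\to \field{R}^3$, write the rigidity matrix $R(f)$ of size $|E| \times 3|V|$ whose row associated with edge $\{ij\}$ has entries $f_j-f_i$ in the block for vertex $i$, $f_i-f_j$ in the block for vertex $j$, and zeros elsewhere. An infinitesimal deformation $\dot f$ is isometric if and only if $R(f)\dot f = 0$, and the $6$-dimensional space of infinitesimal Euclidean motions always lies in the kernel. Since $M$ is a triangulated sphere, Euler's formula together with $3|F|=2|E|$ gives $|E|=3|V|-6$, so $f$ is infinitesimally rigid precisely when $R(f)$ has full row rank $3|V|-6$.

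The condition $\mathrm{rank}\,R(f) < 3|V|-6$ is equivalent to the simultaneous vanishing of all $(3|V|-6)\times (3|V|-6)$ minors of $R(f)$. These minors are polynomials in the coordinates of $f$, so the set of infinitesimally flexible realizations is a real algebraic subvariety $X \subset \field{R}^{3|V|}$. In particular, either $X = \field{R}^{3|V|}$ or $X$ has Lebesgue measure zero and empty interior. Thus the whole statement reduces to producing at least one realization $f$ of $M$ with $R(f)$ of rank $3|V|-6$.

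To exhibit such a realization I would appeal to Steinitz's theorem: every simple $3$-connected planar graph, and in particular the $1$-skeleton of every triangulated sphere, arises as the edge graph of some convex polytope in $\field{R}^3$. Choose such a convex realization $f_0$. By Dehn's theorem, cited in the introduction, every convex triangulated polyhedron is infinitesimally rigid, so $R(f_0)$ has the full rank $3|V|-6$. This shows $X \neq \field{R}^{3|V|}$, hence its complement is open and dense and of full measure, proving the proposition.

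The technical heart is not the algebraic-geometric genericity argument, which is formal once the rigidity matrix is set up, but rather the invocation of Steinitz and Dehn: one must know that every combinatorial triangulated sphere admits a convex realization and that every convex triangulated polyhedron is infinitesimally rigid. Both are classical and used here as black boxes; Gluck's contribution is precisely the observation that combining them with the elementary genericity argument yields the generic rigidity statement without any convexity hypothesis on the realization itself.
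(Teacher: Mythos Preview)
The paper does not prove this proposition: it is stated as a citation of Gluck's 1975 result, with no proof environment following it. Your sketch is a correct outline of Gluck's original argument and aligns with how the paper itself summarizes the ingredients in its Discussion section (Section~\ref{sec:10}): Steinitz's theorem provides a convex realization of every combinatorial triangulated sphere, Dehn's theorem certifies that convex realizations are infinitesimally rigid, and the algebraic-geometry step (maximal rank of the rigidity matrix is Zariski-open) upgrades the single rigid example to generic rigidity. So your approach is essentially the same as the classical one the paper is invoking.
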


\begin{corollary}  \label{prop:intrinsic}
	For almost all simply connected closed surfaces in $\mathbb{R}^3$, there exists an infinitesimal conformal deformation unique up to Euclidean motions for any function $u:V \to \field{R}$ as the scale factor.
\end{corollary}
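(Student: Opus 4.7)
The plan is essentially to observe that this corollary is a direct concatenation of the two immediately preceding results, so the proof should be short. First I would note that a simply connected closed surface is topologically a sphere, so a simply connected closed triangulated surface in $\mathbb{R}^3$ is a (non-degenerate) realization of a triangulated sphere, which is exactly the setting of Proposition \ref{prop:givenu}.

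Next, I would invoke Gluck's theorem (Proposition \ref{prop:gluck}) to conclude that for almost all such realizations $f:V\to\mathbb{R}^3$, the surface is infinitesimally rigid. This is the only nontrivial input: it rules out the measure-zero set of realizations where the map $T:C\to\mathbb{R}^{|V|}$ sending an infinitesimal conformal deformation to its scale factor could fail to be surjective.

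Finally, I would apply Proposition \ref{prop:givenu} to any such infinitesimally rigid realization. That proposition provides, for every prescribed $u:V\to\mathbb{R}$, an infinitesimal conformal deformation of $f$ with scale factor $u$, unique up to Euclidean motions. Combining with Gluck, this holds for almost all simply connected closed triangulated surfaces in $\mathbb{R}^3$, which is precisely the statement of the corollary.

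Since the two ingredients do all the work, there is no real obstacle here; the only thing to be careful about is matching hypotheses, namely that ``simply connected closed triangulated surface'' coincides with ``triangulated sphere'' so that the dimension count $|V|+6$ appearing in the proof of Proposition \ref{prop:givenu} (which uses $|E|-|V|$ for the intrinsic conformal dimension, valid in the sphere case) is applicable. Everything else is a direct quotation.
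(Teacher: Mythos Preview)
Your proposal is correct and matches the paper's own proof, which simply states that the corollary follows from Proposition~\ref{prop:givenu} and Proposition~\ref{prop:gluck}. Your additional remark that a simply connected closed triangulated surface is a triangulated sphere is the appropriate bridge between the hypotheses, and nothing more is needed.
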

\begin{proof}
	It follows from Proposition \ref{prop:givenu} and \ref{prop:gluck}. 
\end{proof} 

As a remark, all triangulated surfaces except tetrahedra admit non-trivial infinitesimal conformal deformations regardless of their infinitesimal rigidity and genus. This can be observed simply by counting: A closed triangulated surface in $\field{R}^3$ of genus $g$ with $V$ vertices has $3V$ degrees of freedom. Notice that the length cross-ratios satisfy $\Pi_j \mbox{lcr}_{ij} =1$ for every vertex $i$. In order to preserve the conformal structure $\mbox{lcr}:E \to \field{R}$ infinitesimally, there are $E-V = 2V-6+6g$ linear constraints. Hence generally the space of infinitesimal conformal deformations in space is at least $3V-(E-V) = V+6-6g$. If the surface is a tetrahedra, then this number is equal to $10$ which coincides with the dimension of the space of M\"{o}bius transformations.

\section{Infinitesimal isometric deformations of $\field{S}^n $ in $\field{R}^{n+1}$}\label{sec:isoconfomal}

We show that every infinitesimal conformal deformation corresponds to an infinitesimal isometric deformation via stereographic projection. Therefore one can apply techniques from the theory of infinitesimal rigidity to that of infinitesimal conformal deformations, such as \emph{rigidity matrices}.

For an inscribed triangulated surface, i.e. whose vertices lie on the unit sphere $S^n$, we establish a correspondence between its infinitesimal \emph{conformal} deformations tangent to the sphere and infinitesimal \emph{isometric} deformations in $\field{R}^{n+1}$. 
\begin{proposition}\label{prop:projection}
	Given an inscribed triangulated surface $f:V \to S^n \subset \field{R}^{n+1}$, its space of infinitesimal isometric deformations in $\field{R}^{n+1}$ is isomorphic to the space of infinitesimal conformal deformations tangent to the sphere $S^n$.
\end{proposition}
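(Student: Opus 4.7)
The plan is to build an explicit linear isomorphism via the radial decomposition of vector fields along $S^n$. Given $f:V \to S^n$, every vector $\dot f_i \in \mathbb{R}^{n+1}$ splits uniquely as
\[
\dot f_i = \dot f_i^T + c_i f_i, \qquad c_i := \langle \dot f_i, f_i\rangle,
\]
where $\dot f_i^T$ is tangent to $S^n$ at $f_i$. I would define the map from infinitesimal isometric deformations in $\mathbb{R}^{n+1}$ to tangent infinitesimal conformal deformations by $\dot f \mapsto \dot f^T$, with scale factor $u_i := -c_i$. The inverse sends a tangent conformal deformation $\tilde f$ with scale factor $u$ to $\dot f_i := \tilde f_i - u_i f_i$.

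The single computational ingredient is the identity $|f_j - f_i|^2 = 2(1-\langle f_i,f_j\rangle)$, which holds because $|f_i|=|f_j|=1$. First I would use it in the forward direction: assuming $\langle \dot f_j - \dot f_i, f_j - f_i\rangle = 0$, expand
\[
\langle \dot f_j^T - \dot f_i^T, f_j - f_i\rangle = \langle \dot f_j - \dot f_i, f_j-f_i\rangle - c_j\langle f_j, f_j - f_i\rangle + c_i\langle f_i, f_j - f_i\rangle,
\]
and substitute $\langle f_i, f_i\rangle = \langle f_j, f_j\rangle = 1$ to obtain $-(c_i+c_j)(1-\langle f_i,f_j\rangle) = \tfrac{(-c_i)+(-c_j)}{2}|f_j-f_i|^2$. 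Thus $\dot f^T$ is conformal with scale factor $u_i = -c_i$, and it is manifestly tangent to $S^n$.

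For the reverse direction, starting from a tangent conformal $\tilde f$ with scale factor $u$ and setting $\dot f_i := \tilde f_i - u_i f_i$, the same identity yields
\[
\langle \dot f_j - \dot f_i, f_j - f_i\rangle = \tfrac{u_i+u_j}{2}|f_j-f_i|^2 - (u_i + u_j)(1-\langle f_i,f_j\rangle) = 0,
\]
so $\dot f$ is isometric in $\mathbb{R}^{n+1}$. It remains to check that the two assignments are inverse to each other; this is an immediate consequence of $\langle \tilde f_i, f_i\rangle = 0$ (so that the radial component $c_i$ of $\dot f_i = \tilde f_i - u_i f_i$ is exactly $-u_i$). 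Linearity of both maps is clear from the definition.

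There is no real obstacle; the only thing to be careful about is bookkeeping the sign convention for the scale factor and confirming that tangency $\langle \tilde f_i, f_i\rangle = 0$ is equivalent to the two maps being mutually inverse, rather than just being a left or right inverse. Once this bijection is in place, the proposition follows at once.
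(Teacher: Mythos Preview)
Your argument is correct and is essentially the same as the paper's: both use the orthogonal splitting $\dot f_i = \dot f_i^T + \langle \dot f_i,f_i\rangle f_i$ to pass between isometric deformations in $\mathbb{R}^{n+1}$ and tangent conformal deformations, and both verify the same identities via $|f_j-f_i|^2 = 2(1-\langle f_i,f_j\rangle)$. The only cosmetic difference is that the paper argues injectivity of the projection separately (using that $v_i = a_i f_i$ with $a_i=-a_j$ on edges forces $a\equiv 0$ on each triangle) before constructing the inverse, whereas you directly exhibit the two maps and check they are mutual inverses; your route is slightly more economical.
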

\begin{proof}
	Suppose $v:V \to \field{R}^{n+1}$ is an infinitesimal isometric deformation of $f$. Then its projection  $v^T$ to the tangent space of the sphere is
	\[
	v^{T}:= v - \langle v,f \rangle f.
	\]
	Since $v$ is isometric, i.e. $\langle v_j - v_i, f_j - f_i \rangle=0$, we have
	\begin{align*}
		&\langle v^T_j - v^T_i, f_j - f_i \rangle \\ =& -\langle \langle v_j,f_j \rangle f_j - \langle v_i,f_i \rangle f_i, f_j - f_i \rangle \\
		=&-\frac{1}{2}\langle (\langle v_j,f_j \rangle + \langle v_i,f_i \rangle) (f_j -  f_i) + (\langle v_j,f_j \rangle - \langle v_i,f_i \rangle) (f_j +  f_i), f_j - f_i \rangle \\
		=& -\frac{1}{2}\langle (\langle v_j,f_j \rangle + \langle v_i,f_i \rangle) |f_j -  f_i|^2.
	\end{align*}
	Hence $v^T$ is an infinitesimal conformal deformation with scale factor $-\langle v,f \rangle$. 
	
	We are going to show that such a projection from infinitesimal isometric deformations to infinitesimal conformal deformations is bijective. Assume $v^T\equiv 0$. Then, we have
	\[
	v_i = a_i f_i
	\] 
	for some $a:V \to \field{R}$. Because $v$ is an infinitesimal isometric deformation, we have
	\[
	a_i = \langle v_i, f_i \rangle = -\langle v_j, f_j \rangle = -a_j \quad \forall e_{ij} \in E.
	\]
	Consider the three vertices of any triangle, such condition is satisfied if and only if $a\equiv 0$.
	Hence, $v\equiv 0$ and the projection is injective.
	
	On the other hand, suppose $w$ is an infinitesimal conformal deformation tangent to $S^n$ with scale factor $u$. We define an infinitesimal deformation by
	\[
	v := w - u f.  
	\]
	Then,
	\begin{align*}
		&\langle v_j-v_i ,f_j-f_i \rangle \\
		=& \langle w_j -w_i, f_j -f_i  \rangle - \langle u_j f_j -u_i f_i, f_j -f_i  \rangle \\
		=& \frac{u_i + u_j}{2} |f_j-f_i|^2 - \frac{1}{2}\langle (u_j -u_i) (f_j+f_i) + (u_j+u_i) (f_j-f_i), f_j -f_i  \rangle \\
		=& 0
	\end{align*}
	which implies $v$ is an infinitesimal isometric deformation and $v^{T} = w$. Hence the projection is bijective. 
\end{proof}

The following is the infinitesimal version of the M\"{o}bius invariance of conformal equivalence.
\begin{lemma} \label{lem:mobinf}
	Let $\dot{f}$ be an infinitesimal conformal deformation of $f:V \to \field{R}^n$. Then for every M\"{o}bius transformations $\Phi$, the infinitesimal deformation $d\Phi(\dot{f})$ of $\Phi \circ f$ is conformal.
\end{lemma}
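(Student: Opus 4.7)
My plan is to reduce this infinitesimal statement to the finite M\"obius invariance of conformal equivalence already established (the first proposition of this section), by viewing $\dot f$ as the derivative at $t=0$ of the straight-line family $f_t := f + t\dot f$.

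By the characterization in the preceding definition, $\dot f$ being infinitesimally conformal is equivalent to the length cross-ratios being stationary along $f_t$:
\[
\frac{d}{dt}\bigg|_{t=0}\mathrm{lcr}_{ij}(f_t) = 0 \qquad \forall\, \{ij\}\in E_{int}.
\]
Now fix a M\"obius transformation $\Phi$. For $|t|$ sufficiently small, $\Phi\circ f_t$ is again a well-defined non-degenerate realization (avoiding the pole of $\Phi$, if any), and the preceding proposition applies to each $t$ individually, giving the identity
\[
\mathrm{lcr}_{ij}(\Phi\circ f_t) = \mathrm{lcr}_{ij}(f_t)
\]
in a neighbourhood of $t=0$. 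Differentiating this identity at $t=0$ and observing that $\frac{d}{dt}\big|_{t=0}(\Phi\circ f_t) = d\Phi(\dot f)$, I conclude that $d\Phi(\dot f)$ leaves the length cross-ratios of $\Phi\circ f$ stationary to first order, which is exactly the statement that $d\Phi(\dot f)$ is an infinitesimal conformal deformation of $\Phi\circ f$.

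The only subtlety is the pole issue when $\Phi$ is an inversion: if some $f_i=0$ then $\Phi\circ f$ is not defined at that vertex. This is harmless, since one may pre-compose with a translation (for which the lemma is trivial) to move the pole off the vertex set, or equivalently restrict to $|t|$ small enough that $f_t(v)$ avoids the pole for every $v\in V$. I therefore expect no real obstacle beyond this mild genericity check. A direct computational alternative, checking the lemma on the three generators of the M\"obius group, would also work: translations and dilations are immediate because $f_j-f_i$ and $\dot f_j-\dot f_i$ are either unchanged or uniformly rescaled, and for the inversion $\Phi(x)=x/|x|^2$ a short calculation using $d\Phi_x(v) = v/|x|^2 - 2\langle x,v\rangle x/|x|^4$ together with the identity $|\Phi(f_j)-\Phi(f_i)|^2 = |f_j-f_i|^2/(|f_i|^2|f_j|^2)$ from the preceding proposition produces the conformal relation for $d\Phi(\dot f)$ with new scale factor $\tilde u_i = u_i - 2\langle f_i,\dot f_i\rangle/|f_i|^2$. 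The reduction to the finite case is however much cleaner and avoids any explicit case split.
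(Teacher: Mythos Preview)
Your argument is correct. The main route you take---differentiating the identity $\mathrm{lcr}_{ij}(\Phi\circ f_t)=\mathrm{lcr}_{ij}(f_t)$ furnished by the preceding proposition---is not the paper's route. The paper does exactly what you sketch as the ``direct computational alternative'': it reduces to the generators of the M\"obius group and, for the inversion $\Phi(f)=-f/|f|^2$, computes explicitly that
\[
\langle d\Phi(\dot f_j)-d\Phi(\dot f_i),\,\Phi(f_j)-\Phi(f_i)\rangle
=\Big(u_i-\tfrac{2\langle \dot f_i,f_i\rangle}{|f_i|^2}+u_j-\tfrac{2\langle \dot f_j,f_j\rangle}{|f_j|^2}\Big)\,|\Phi(f_j)-\Phi(f_i)|^2,
\]
which is precisely the scale-factor formula you wrote down. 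Your differentiation argument is cleaner and genuinely avoids any computation; the paper's version has the mild advantage of producing the new scale factor $\tilde u_i = u_i - 2\langle f_i,\dot f_i\rangle/|f_i|^2$ explicitly, though that formula is not used elsewhere in the paper. The pole caveat you raise is handled the same way in both approaches, namely by translating first.
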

\begin{proof}
	Since M\"{o}bius transformations are generated by Euclidean transformations and inversions, it suffices to consider the inversion under the unit sphere, $\Phi (f) = -f/|f|^2$. Suppose $\dot{f}$ is an infinitesimal conformal deformation of $f$ with scale factor $u:V \to \field{R}$. We have
	\[
	d\Phi(\dot{f})= - \frac{\dot{f}}{|f|^2} + \frac{2 \langle \dot{f}, f \rangle}{|f|^4} f.
	\]
	By direct computation, we get
	\begin{align*}
		& \langle d\Phi(\dot{f}_j)- d\Phi(\dot{f}_i), \Phi(f_j) - \Phi(f_i) \rangle \\
		=& (u_i - \frac{2 \langle \dot{f}_i, f_i \rangle}{|f_i|^2} + u_j - \frac{2 \langle \dot{f}_j, f_j \rangle}{|f_j|^2}) |\Phi(f_j) - \Phi(f_i)|^2 
	\end{align*}
	which implies $d\Phi(\dot{f})$ is an infinitesimal conformal deformation of $\Phi \circ f$. 
\end{proof}

\begin{proof}[Proof of Proposition \ref{thm:isoconf}]
	With the fact that the stereographic projection is a M\"{o}bius transformation, Proposition \ref{thm:isoconf} follows from Lemma \ref{prop:projection} and Lemma \ref{lem:mobinf}. 
\end{proof}

\section{Infinitesimal conformal deformations in $\field{R}^3$} \label{sec:5}

We represent infinitesimal conformal deformations in terms of scale factor $u:V\to \mathbb{R}$ describing the change in edge lengths and vector field $Z:F \to \mathbb{R}^3$ that determines the rotation of faces. The scale factor $u$ alone induces a change in the Gaussian curvature \cite{Glickenstein2011}. To realize such a change in $\mathbb{R}^3$, e.g. from a flat plane to a cone, one has to rotate the faces in a certain way such that the surface is not teared apart. This section is to establish the relation between scale factor $u$ and rotation vector field $Z$ as in the following theorem, which will be a cornerstone for Theorem \ref{thm:givenrho}. 

\begin{theorem} \label{thm:uZ} 
	Let $f:V \to \field{R}^3$ be a  triangulated surface with face normal $N:F \to \mathbb{S}^2$ and $\dot{f}:V \to \field{R}^3$ be an infinitesimal conformal deformation with scale factor $u:V \to \field{R}$. Then there exists a unique vector field defined on faces $Z: F \to \field{R}^3$  satisfying
	\begin{align} \label{eq:infinlambda}
		\begin{aligned}
			\dot{f}_j - \dot{f}_i &= \frac{u_i + u_j}{2} (f_j - f_i) + (f_j - f_i) \times (Z_{ijk} + \frac{\cot \angle jki}{2} (u_j - u_i) N_{ijk})  \\
			&= \frac{u_i + u_j}{2} (f_j - f_i) + (f_j - f_i) \times (Z_{ilj} + \frac{\cot \angle ilj}{2} (u_i - u_j) N_{ilj})
		\end{aligned}
	\end{align}
	where $\{ijk\},\{ilj\}$ are the left and the right faces of the oriented edge $e_{ij}$. In particular, eq. \eqref{eq:infinlambda} implies that the functions $u, Z$ satisfy
	\begin{align}
		(f_j-f_i) \times  ((Z_{ijk} - Z_{ilj}) + (u_j-u_i) (\frac{\cot \angle jki}{2}N_{ijk}+\frac{\cot \angle {ilj}}{2}N_{ilj})) &= 0 \label{eq:imDD}
	\end{align}
	and the change in dihedral angles $\dot{\alpha}$ is given by
	\begin{align} \label{eq:changedi}
		\langle f_j -f_i,Z_{ijk} - Z_{ilj}\rangle  &= \dot{\alpha}_{ij}|f_j-f_i|.
	\end{align}
	Conversely, if the triangulated surface is simply connected and functions $u : V \to \field{R}$, $Z:F \to \field{R}^3$ satisfy \eqref{eq:imDD}, then there exists an infinitesimal conformal deformation $\dot{f}$ satisfying \eqref{eq:infinlambda} unique up to translations.
\end{theorem}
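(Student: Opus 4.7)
My plan is to reduce the theorem to per-face linear algebra plus a standard discrete exterior calculus identity, then globalize via a $1$-form construction. For the forward direction, I work face by face. Fix a face $\{ijk\}$ with unit normal $N_{ijk}$. Conformality pins down the $(f_j-f_i)$-component of $\dot f_j-\dot f_i$ on each edge, so the remaining two degrees of freedom live in the span of $(f_j-f_i)\times N_{ijk}$ and $N_{ijk}$. Reading off \eqref{eq:infinlambda} in this frame, the in-face perpendicular component of $\dot f_j-\dot f_i$ determines $\langle N_{ijk},Z_{ijk}\rangle$ shifted by $\tfrac{\cot\angle jki}{2}(u_j-u_i)$, while the $N_{ijk}$-component determines the in-face part of $Z_{ijk}$. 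The three edges therefore impose $3\times 2=6$ scalar conditions on the $3$ parameters of $Z_{ijk}$, and the task is to show they are compatible and determine $Z_{ijk}$ uniquely.

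The compatibility condition is the triangle identity
\[
\sum_{\text{edges of }\{ijk\}}\tfrac{u_i+u_j}{2}(f_j-f_i)\,+\,\sum_{\text{edges of }\{ijk\}}\tfrac{\cot\angle jki}{2}(u_j-u_i)\,(f_j-f_i)\times N_{ijk}=0,
\]
which I expect to be the main technical step. It is a standard discrete exterior calculus fact: both sums represent (up to sign) the rotated gradient $A\,N_{ijk}\times\nabla u$ of the affine extension of $u$ to the face, once by regrouping $\tfrac12\sum u_i(f_{\text{next}(i)}-f_{\text{prev}(i)})$ and once via the cotangent formula $\nabla u=\sum_i u_i\nabla\phi_i$ combined with the edge-rotation identity $(f_j-f_i)\times N_{ijk}=-2A\,\nabla\phi_k$. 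Given the identity, the sum $\sum_{\text{edges}}(f_j-f_i)\times Z_{ijk}$ vanishes trivially around the face, so \eqref{eq:infinlambda} is consistent, and any two non-parallel edge equations then recover $Z_{ijk}\in\mathbb{R}^3$ uniquely.

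With $Z$ constructed on every face, equation \eqref{eq:imDD} follows by subtracting the two expressions for $\dot f_j-\dot f_i$ coming from the left and the right face of an interior edge. For \eqref{eq:changedi}, I would differentiate $2A_{ijk}N_{ijk}=(f_j-f_i)\times(f_k-f_i)$ and substitute \eqref{eq:infinlambda} on both edges of the cross product. The scaling pieces and the $N_{ijk}$-components of $Z$ contribute only to $\dot A_{ijk}$, while the in-face components of $Z_{ijk}$ combine via the BAC-CAB identity into $\dot N_{ijk}=N_{ijk}\times Z_{ijk}$. Differentiating $\cos\alpha_{ij}=\langle N_{ijk},N_{ilj}\rangle$ and using the sign convention $N_{ijk}\times N_{ilj}=\sin\alpha_{ij}(f_j-f_i)/|f_j-f_i|$ then gives $\dot\alpha_{ij}=\langle Z_{ijk}-Z_{ilj},(f_j-f_i)/|f_j-f_i|\rangle$, which rearranges to \eqref{eq:changedi}.

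For the converse, given $u$ and $Z$ satisfying \eqref{eq:imDD} on a simply connected triangulated surface, I define an $\mathbb{R}^3$-valued primal $1$-form $\omega$ on $\vv{E}$ by the right-hand side of \eqref{eq:infinlambda} computed from the left face of each oriented edge. Condition \eqref{eq:imDD} says the value obtained from the right face agrees, so $\omega$ is well defined; the triangle identity above says $\omega$ is closed around every face. Simple connectedness then yields $\dot f:V\to\mathbb{R}^3$ with $d\dot f=\omega$, unique up to translation, and the $(f_j-f_i)$-component of the formula is precisely the conformality condition for $\dot f$.
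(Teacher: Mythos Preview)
Your proposal is correct and follows essentially the same route as the paper. Both arguments reduce to a per-face linear problem whose solvability hinges on the cotangent-weighted triangle identity you state, then glue the faces via a closed primal $1$-form for the converse. The only organisational difference is that the paper builds $Z_{ijk}$ constructively---first fixing its in-plane part as $-N_{ijk}\times\dot N_{ijk}$ and then extracting the normal component from the closedness condition---whereas you treat the three edge equations as an abstract $6\times3$ linear system and verify compatibility. Your discrete-gradient interpretation of the triangle identity (both sums equal $A\,N_{ijk}\times\nabla u$) is a pleasant alternative to the paper's direct substitution of $f_j-f_i=\cot(\angle ijk)\,N_{ijk}\times(f_i-f_k)-\cot(\angle kij)\,N_{ijk}\times(f_k-f_j)$, and for \eqref{eq:changedi} you differentiate $\cos\alpha_{ij}$ where the paper differentiates $\sin\alpha_{ij}$; neither change is substantive.
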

The rest of this section is to prove the theorem above.

Generally, every infinitesimal deformation $\dot{f}$ can be written as
\[
\dot{f}_j - \dot{f}_i = \sigma_{ij} (f_j-f_i)  + W_{ij} \times (f_j-f_i) \ 
\]
where $\sigma_{ij}=\sigma_{ji} \in \mathbb{R}$ and $W_{ij} = W_{ji} \in \mathbb{R}^3$. Note that $W$ is unique up an additive multiple of $f_j - f_i$. This additive constant will be normalized within each of its neighboring faces separately. The difference of the additive constants from neighboring faces will yield the change in the dihedral angle between the face normals. 

Suppose $\dot{f}$ is conformal with scale factor $u$, we have
\[
\sigma_{ij} = \langle \dot{f}_j - \dot{f}_i, f_j - f_i \rangle  / |f_j - f_i|^2 = \frac{u_i + u_j}{2}.
\]

It remains to investigate the dependence of $W$ on $u$. We focus on a triangle $\{ijk\}$ and its edges first. We denote $N_{ijk}$ the face normal. As mentioned, $W_{ij}$ is unique up an additive multiple of $f_j - f_i$. We will normalize $W_{ij}$ with respect to $\{ijk\}$ and write the normalization as $W_{ij,k}$. For the moment we decompose $W_{ij}$ into two components
\[
W_{ij} = \omega_{ij,k} N_{ijk} + Y_{ij,k} 
\]
for some $\omega_{ij,k} \in \mathbb{R}$ and  $Y_{ij,k}  \perp N_{ijk}$. We define
\[
Y_{ijk}:= N_{ijk} \times \dot{N}_{ijk}
\]
where $\dot{N}$ is the change in the face normal. Since $\langle \dot{N}, N \rangle = 0$, we have
\[
\dot{N}_{ijk} = Y_{ijk} \times N_{ijk}.
\]
Note that $Y_{ijk}$ and $Y_{ij,k}$ are perpendicular to $N_{ijk}$ and
\[
\langle Y_{ijk} \times N_{ijk}, f_j-f_i \rangle = \langle \dot{N}_{ijk}, f_j - f_i \rangle = - \langle N_{ijk}, \dot{f}_j - \dot{f}_i \rangle = \langle Y_{ij,k} \times N_{ijk} , f_j - f_i \rangle
\]
which implies $Y_{ijk}-Y_{ij,k}$ is a multiple of $f_j-f_i$. Similarly, we can deduce
$Y_{ijk}-Y_{jk,i}$ is a multiple of $f_k-f_j$ and $Y_{ijk}-Y_{ki,j}$ is a multiple of $f_i-f_k$. We then define
\[
W_{ij,k}:= \omega_{ij,k} N_{ijk} + Y_{ijk} 
\]
to be a normalization of $W_{ij}$ with respect to $\{ijk\}$ since $W_{ij,k} - W_{ij}$ differ by a multiple of $f_j - f_i$. Similarly we have $W_{jk,i}$ and $W_{ki,j}$. The geometric meaning of $Y_{ijk}$ is clear. It describes the rotation of the face normal. The next step is to derive a relation between $\omega$ and scale factor $u$.

The closeness condition
\begin{align*}
	0 =& (\dot{f}_j - \dot{f}_i ) + (\dot{f}_k - \dot{f}_j )+(\dot{f}_i - \dot{f}_k )
\end{align*}
implies
\begin{align}\label{eq:con3final} \begin{aligned}
		0=& \quad \sigma_{ij} (f_j-f_i) + \sigma_{jk} (f_k -f_j) + \sigma_{ki} (f_i - f_k) + \omega_{ij,k} N_{ijk} \times  (f_j -f_i)\\ &+ \omega_{jk,i} N_{ijk}  \times (f_k-f_j) + \omega_{ki,j} N_{ijk} \times(f_i -f_k).
	\end{aligned}
\end{align}
Note that $f_j-f_i \in \mbox{span}\{N_{ijk}\times (f_k-f_j),N_{ijk}\times (f_i-f_k)\}$. In fact
\begin{align*}
	f_j-f_i &= \cot(\angle ijk) N_{ijk} \times (f_i-f_k)-\cot(\angle kij) N_{ijk} \times (f_k-f_j).
\end{align*}
Substituting it into \eqref{eq:con3final} implies
\begin{align*}
	0 =& \sum \big(\omega_{ij,k}+(\sigma_{jk} - \sigma_{ki}) \cot \angle jki \big) N_{ijk} \times (f_j-f_i).
\end{align*}
Since $N_{ijk}\times (f_j-f_i)$,$N_{ijk}\times  (f_k-f_j)$ and $N_{ijk}\times  (f_i-f_k)$ span an affine plane and 
\[
N_{ijk}\times (f_j-f_i)+N_{ijk}\times(f_k-f_j)+N_{ijk}\times (f_i-f_k)=0,
\]
there exists a unique number $\omega_{ijk}$ such that
\begin{align*}
	\omega_{ijk} =& \omega_{ij,k}+ (\sigma_{jk} - \sigma_{ki}) \cot \angle jki  \\ =& \omega_{jk,i}+(\sigma_{ki} - \sigma_{ij})  \cot \angle kij \\ =&\omega_{ki,j}+(\sigma_{ij} - \sigma_{jk}) \cot \angle ijk.
\end{align*}
Because $\sigma_{ij}=\frac{u_i+u_j}{2}$ we have
\begin{align*}
	\omega_{ij,k}&=\omega_{ijk} -\frac{\cot \angle jki}{2} (u_j - u_i).
\end{align*}
Thus,
\[
W_{ij,k} = (\omega_{ijk}-\frac{\cot \angle jki}{2} (u_j - u_i)) N_{ijk} + Y_{ijk}= -Z_{ijk}  -\frac{\cot \angle jki}{2} (u_j - u_i) N_{ijk}.
\]
Here $Z_{ijk}:=-(\omega_{ijk} N_{ijk} +Y_{ijk})$ is associated to the face and describes the its infinitesimal rotation. 

Similarly consider the other neighboring face $\{ilj\}$ that contains $\{ij\}$, the normalization of $W_{ji}=W_{ij}$ is
\[
W_{ji,l} = -Z_{ijk}  -\frac{\cot \angle jki}{2} (u_j - u_i) N_{ijk}.
\]
\begin{lemma}
	The dihedral angle $\alpha$ between the normal is defined by
	\[
	\sin \alpha_{ij} = \langle N_{ijk} \times N_{ilj}, \frac{f_j-f_i}{|f_j-f_i|} \rangle.
	\]
	Its change under an infinitesimal conformal deformation satisfies
	\[
	\dot{\alpha}_{ij} = \langle Z_{ijk}-Z_{ilj} , \frac{f_j-f_i}{|f_j-f_i|} \rangle = \langle W_{ji,l} - W_{ij,k} , \frac{f_j-f_i}{|f_j-f_i|} \rangle
	\]
	where $\{ijk\},\{ilj\}$ are the left and the right face of the oriented edge from $i$ to $j$. Since $W_{ji,l} - W_{ij,k}$ differ only by a multiple of $f_j-f_i$, we indeed have
	\[
	W_{ji,l} - W_{ij,k} = \dot{\alpha}_{ij} \frac{f_j-f_i}{|f_j-f_i|}
	\]
\end{lemma}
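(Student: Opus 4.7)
The lemma comprises three nested assertions: (i) the formula $\dot{\alpha}_{ij} = \langle Z_{ijk} - Z_{ilj}, \frac{f_j - f_i}{|f_j - f_i|}\rangle$; (ii) the same quantity equals $\langle W_{ji,l} - W_{ij,k}, \frac{f_j - f_i}{|f_j - f_i|}\rangle$; and (iii) the stronger vector identity $W_{ji,l} - W_{ij,k} = \dot{\alpha}_{ij}\, \frac{f_j - f_i}{|f_j - f_i|}$. The plan is to prove (i) by direct differentiation, then deduce (ii) and (iii) from the explicit formulas for $W_{ij,k}$ and $W_{ji,l}$ already established in the preceding paragraphs.

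For (i), I would differentiate $\cos\alpha_{ij} = \langle N_{ijk}, N_{ilj}\rangle$ with respect to the deformation parameter. The computation preceding the lemma gave the decomposition $W_{ij,k} = \omega_{ij,k} N_{ijk} + Y_{ijk}$ together with the identification $Y_{ijk} = -Z_{ijk} - \omega_{ijk} N_{ijk}$, so $\dot{N}_{ijk} = Y_{ijk}\times N_{ijk} = -Z_{ijk}\times N_{ijk}$, and symmetrically $\dot{N}_{ilj} = -Z_{ilj}\times N_{ilj}$. Two applications of the triple product identity $\langle a\times b, c\rangle = \langle a, b\times c\rangle$ collapse the derivative of the right-hand side to $\langle Z_{ilj} - Z_{ijk}, N_{ijk}\times N_{ilj}\rangle$. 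Because both face normals are orthogonal to the shared edge vector $f_j - f_i$, their cross product is parallel to it, and by the very definition of the dihedral angle from Section \ref{sec:2} one has $N_{ijk}\times N_{ilj} = \sin\alpha_{ij}\cdot\frac{f_j - f_i}{|f_j - f_i|}$. Cancelling the common factor $\sin\alpha_{ij}$ from both sides yields (i); the degenerate case $\sin\alpha_{ij}=0$ is handled by continuity, or equivalently by differentiating $\sin\alpha_{ij}$ directly, where then $\cos\alpha_{ij}\neq 0$.

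For (ii), it suffices to observe that in the explicit formulas
\[
W_{ij,k} = -Z_{ijk} - \frac{\cot\angle jki}{2}(u_j - u_i) N_{ijk}, \qquad W_{ji,l} = -Z_{ilj} - \frac{\cot\angle ilj}{2}(u_i - u_j) N_{ilj}
\]
derived just above, the difference $W_{ji,l} - W_{ij,k}$ equals $Z_{ijk} - Z_{ilj}$ plus a linear combination of $N_{ijk}$ and $N_{ilj}$. Since both normals are orthogonal to $f_j - f_i$, pairing with $\frac{f_j - f_i}{|f_j - f_i|}$ kills that combination and delivers (ii). Finally, (iii) is essentially automatic: $W_{ij,k}$ and $W_{ji,l}$ are two normalizations of the same ambiguous vector $W_{ij} = W_{ji}$, each differing from $W_{ij}$ only by a multiple of $f_j - f_i$, so their difference is itself a scalar multiple of $f_j - f_i$; that scalar is then pinned down by the inner product already computed in (i) and (ii), yielding the stated vector equality.

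The only real obstacle is sign bookkeeping: one must carefully track the minus signs through $\dot{N} = -Z\times N$ and through both rearrangements of the triple product, since a single misplaced sign would flip the direction of $\dot{\alpha}_{ij}$. Beyond that the argument is routine and uses no nontrivial geometric input beyond the identification $N_{ijk}\times N_{ilj} = \sin\alpha_{ij}(f_j - f_i)/|f_j - f_i|$, which is immediate from the definition of $\alpha_{ij}$ in Section \ref{sec:2}.
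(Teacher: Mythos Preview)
Your proposal is correct and follows essentially the same route as the paper: differentiate the defining relation for the dihedral angle, express $\dot{N}$ via the rotation field $Z$, and reduce via the scalar triple product. The only cosmetic difference is that you differentiate $\cos\alpha_{ij}=\langle N_{ijk},N_{ilj}\rangle$ and then cancel a factor of $\sin\alpha_{ij}$, whereas the paper differentiates $\sin\alpha_{ij}$ directly and cancels $\cos\alpha_{ij}$; the paper's choice sidesteps the $\sin\alpha_{ij}=0$ degeneracy you have to patch by continuity, but otherwise the two computations are interchangeable. Your treatment of (ii) and (iii) matches the paper's implicit reasoning from the formulas for $W_{ij,k}$ established just before the lemma.
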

\begin{proof}
	Differentiating $\sin \alpha$ yields
	\begin{align*}
		\dot{\alpha}_{ij} \cos \alpha_{ij} =& \langle \dot{N}_{ijk} \times N_{ilj}+N_{ijk} \times \dot{N}_{ilj}, \frac{f_j-f_i}{|f_j-f_i|} \rangle \\
		=& \langle \big(N_{ijk} \times Z_{ijk} \big) \times N_{ilj}+N_{ijk} \times \big(N_{ilj} \times Z_{ilj}\big), \frac{f_j-f_i}{|f_j-f_i|} \rangle \\
		=& \cos \alpha_{ij} \langle Z_{ijk} - Z_{ilj} , \frac{f_j-f_i}{|f_j-f_i|} \rangle.
	\end{align*}
	Hence
	\[
	\dot{\alpha}_{ij} = \langle Z_{ijk} - Z_{ilj} , \frac{f_j-f_i}{|f_j-f_i|} \rangle.
	\] 
\end{proof}
The above lemma implies the function $Z:F\to\mathbb{R}^3$ satisfy a compatibility condition that
\begin{equation} \label{eq:vectorexact}
	Z_{ijk}-Z_{ilj}=-(u_j-u_i)(\frac{\cot \angle jki}{2}N_{ijk}+\frac{\cot \angle {ilj}}{2}N_{ilj})+\dot{\alpha}_{ij} \frac{f_j-f_i}{|f_j-f_i|} .
\end{equation}
We decompose this equation into two components. One is parallel to $f_j-f_i$
\begin{align*}
	\langle f_j-f_i,Z_{ijk}-Z_{ilj}\rangle  = \dot{\alpha}_{ij}|f_j-f_i|
\end{align*}
and the other is perpendicular to $f_j-f_i$ 
\begin{align*}
	(f_j-f_i) \times  ((Z_{ijk} - Z_{ilj}) + (u_j-u_i) (\frac{\cot \angle jki}{2}N_{ijk}+\frac{\cot \angle {ilj}}{2}N_{ilj})) = 0
\end{align*}
as stated in Theorem \ref{thm:uZ}.

Conversely, given $Z: F \to \field{R}^3$ satisfying  Equation \eqref{eq:vectorexact} for some $u:V \to \field{R}$, one could immediately check that the 1-form
\begin{align*}
	\eta(e_{ij}) &= \frac{u_i + u_j}{2} (f_j-f_i) + (f_j-f_i) \times (Z_{ijk} + \frac{\cot \angle jki}{2} \, (u_j - u_i) N_{ijk}) \\
	&= \frac{u_i + u_j}{2} (f_j-f_i) + (f_j-f_i) \times (Z_{ilj} + \frac{\cot \angle ilj}{2} \, (u_i - u_j) N_{ilj})
\end{align*}
is well defined and is closed. If the triangulated surface is simply connected, then there exists an infinitesimal conformal deformation $\dot{f}:V \to \field{R}^3$ unique up to translations such that
\[
\dot{f}_j - \dot{f}_i = \eta(e_{ij}).
\]

\section{Angular velocity equation}\label{sec:6}

The derivation in the previous section yields an interesting equation. 

\begin{proposition}[Angular velocity equation]\label{thm:ave}
	Under an infinitesimal conformal deformation with scale factor $u:V \to \field{R}$, the change in dihedral angles $\dot{\alpha}$ satisfy for every interior vertex $i$
	\[
	\sum_j \dot{\alpha}_{ij} \frac{f_j-f_i}{|f_j-f_i|} = \sum_j (u_j-u_i)(\frac{\cot \angle jki}{2}N_{ijk}+\frac{\cot \angle {ilj}}{2}N_{ilj}) = \sum_{jk} \dot{\beta}_{ijk}  N_{ijk}
	\]
	where $\{jk\}$ is an edge of a face $\{ijk\}$ and $\dot{\beta}_{ijk}$ denotes the change in $\angle ijk$.
\end{proposition}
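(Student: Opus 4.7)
The plan is to sum the compatibility identity \eqref{eq:vectorexact} of Theorem~\ref{thm:uZ} around the vertex $i$, which will yield the first equality almost for free, and then to regroup the result by faces in order to recognise the coefficients as triangle angle changes.

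First I would sum \eqref{eq:vectorexact} over all oriented edges $e_{ij}$ emanating from the interior vertex $i$. The left-hand sides add up to $\sum_j (Z_{ijk}-Z_{ilj})$; around $i$ the incident faces form a closed cyclic chain, so every face $\{ijk\}$ around $i$ contributes $+Z_{ijk}$ (as the left face of $e_{ij}$) and $-Z_{ijk}$ (as the right face of $e_{ik}$). Hence this total telescopes to zero. Equating the corresponding sum of right-hand sides to zero and rearranging gives
\[ \sum_j \dot\alpha_{ij}\,\frac{f_j-f_i}{|f_j-f_i|} \;=\; \sum_j (u_j-u_i)\!\left(\frac{\cot\angle jki}{2}N_{ijk} + \frac{\cot\angle ilj}{2}N_{ilj}\right), \]
which is the first equality.

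For the second equality I would regroup the middle sum by faces rather than by edges: each face $\{ijk\}$ incident to $i$ enters twice, namely as the left face of $e_{ij}$ and as the right face of $e_{ik}$. Collecting the coefficient of $N_{ijk}$ yields
\[ \tfrac{1}{2}\cot(\angle jki)\,(u_j-u_i) \;+\; \tfrac{1}{2}\cot(\angle ijk)\,(u_k-u_i), \]
which I would identify with $\dot{\beta}_{ijk}$ via the standard formula for the derivative of a Euclidean triangle's interior angle under the infinitesimal vertex scaling $\dot{\ell}_{pq}/\ell_{pq}=(u_p+u_q)/2$. That formula follows from differentiating the law of cosines $\cos\theta=(b^2+c^2-a^2)/(2bc)$ for the triangle $\{ijk\}$ and simplifying with the projection identity $a=b\cos C+c\cos B$ (and its cyclic variants).

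The telescoping step is purely combinatorial once the orientations of the incident faces are tracked consistently, so the real technical point is the trigonometric identification of the regrouped coefficient of $N_{ijk}$ with $\dot{\beta}_{ijk}$. This is a short but non-mechanical computation with the law of cosines, and once it is in hand both equalities follow at once.
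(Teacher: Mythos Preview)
Your proposal is correct and follows essentially the same route as the paper: sum \eqref{eq:vectorexact} over the edges at $i$ so that the $Z$-terms telescope, giving the first equality, then regroup by faces and identify the coefficient of $N_{ijk}$ as the derivative of the angle at $i$. The only cosmetic difference is that the paper differentiates the dot-product expression $\cos\beta_{kij}=\langle f_j-f_i,f_k-f_i\rangle/(|f_j-f_i|\,|f_k-f_i|)$ directly rather than the law-of-cosines form you cite, but this is the same computation and yields the same formula $\dot\beta_{kij}=\tfrac{1}{2}\cot(\angle jki)(u_j-u_i)+\tfrac{1}{2}\cot(\angle ijk)(u_k-u_i)$.
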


\begin{proof}
	It follows from Equation \eqref{eq:vectorexact} that for every interior vertex $i$
	\[
	\sum_j \big( \dot{\alpha}_{ij} \frac{f_j-f_i}{|f_j-f_i|} -(u_j-u_i)(\frac{\cot \angle jki}{2}N_{ijk}+\frac{\cot \angle {ilj}}{2}N_{ilj}) \big) = \sum_j (Z_{ijk}-Z_{ilj})= 0
	\]
	Furthermore
	\[
	\cos \beta_{kij} = \frac{\langle f_j-f_i, f_k-f_i\rangle}{|f_j-f_i| |f_k-f_i|}. 
	\]
	Differentiating both sides yields
	\begin{align*}
		-\dot{\beta}_{kij} \sin \beta_{kij} 
		=& -\sin \beta_{kij} ((u_j-u_i)\frac{\cot \angle jki}{2} + (u_k-u_i)\frac{\cot \angle ijk}{2} )
	\end{align*}
	and thus
	\[
	\dot{\beta}_{kij} = (u_j-u_i)\frac{\cot \angle jki}{2} + (u_k-u_i)\frac{\cot \angle ijk}{2}.
	\] 
\end{proof}

For every infinitesimal isometric deformation, we have $u \equiv 0$ and hence obtain the standard identity \cite[Lemma 28.2]{Pak2010}
\[
\sum_j \dot{\alpha}_{ij} \frac{f_j-f_i}{|f_j-f_i|}=0   \quad \forall i \in V_{int}
\]
which is related to infinitesimal deformations of spherical polygons with fixed edge lengths \cite{Schlenker2007}.

\begin{corollary}[\cite{Springborn2008,Lam2016a}]\label{cor:planarconf}
	Suppose an infinitesimal conformal deformation of an immersed triangulated surface in the plane has scale factor $u: V \to \field{R}$. Then the function $u$ is a discrete harmonic function with respect to the cotangent Laplacian, i.e. for every interior vertex $i$
	\[
	\sum_j (\cot \angle jki+ \cot \angle {ilj})(u_j-u_i) =0
	\]
	and the change in dihedral angle $\dot{\alpha}$ satisfies
	\[
	\sum_j \dot{\alpha}_{ij} \frac{f_j-f_i}{|f_j-f_i|} = 0.
	\]
\end{corollary}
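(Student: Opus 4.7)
The plan is to specialize the angular velocity equation (Proposition \ref{thm:ave}) to the planar situation and then separate the identity into components parallel and perpendicular to the common plane.

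First, I would observe that when $f:V \to \field{R}^3$ takes values in a single affine $2$-plane $\Pi \subset \field{R}^3$, every triangle of the realization lies in $\Pi$. Choosing a consistent orientation, all face normals coincide with a single unit vector $N$ perpendicular to $\Pi$, i.e.\ $N_{ijk}=N_{ilj}=N$ for every interior edge. In particular every edge vector $f_j-f_i$ is orthogonal to $N$, which is the structural fact that allows the identity of Proposition \ref{thm:ave} to split.

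Next I would substitute this into the angular velocity equation at each interior vertex $i$. The right hand side collapses to
\[
\sum_j (u_j-u_i)\Bigl(\tfrac{\cot \angle jki}{2}N_{ijk}+\tfrac{\cot \angle {ilj}}{2}N_{ilj}\Bigr)=\tfrac{N}{2}\sum_j (\cot \angle jki+\cot \angle ilj)(u_j-u_i),
\]
which is a scalar multiple of $N$, while the left hand side $\sum_j \dot{\alpha}_{ij}\,(f_j-f_i)/|f_j-f_i|$ is a vector lying in $\Pi$, hence orthogonal to $N$. Since a vector is both parallel and perpendicular to $N$ only if it vanishes, both sides must be zero. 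The vanishing of the scalar coefficient on the right gives the cotangent Laplacian identity
\[
\sum_j(\cot\angle jki+\cot\angle ilj)(u_j-u_i)=0,
\]
and the vanishing of the left hand side gives the second claimed equation.

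There is no real obstacle here; the entire content is already packaged inside Proposition \ref{thm:ave}, and the only thing to check is the elementary linear-algebra observation that a vector cannot simultaneously be a nonzero in-plane vector and a nonzero multiple of the normal. If anything, the mild point to be careful about is that one is allowing $\dot{f}$ to be an infinitesimal conformal deformation in $\field{R}^3$ (not merely in the plane $\Pi$), so that $\dot{\alpha}_{ij}$ need not be zero even though $\alpha_{ij}=0$ for the undeformed planar surface; this is precisely what makes the first equation nontrivial.
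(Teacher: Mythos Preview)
Your argument is correct and follows exactly the approach of the paper: specialize Proposition~\ref{thm:ave} to a planar realization and split the resulting identity into its in-plane and normal components. The paper's proof is just the one-line remark that the corollary follows from Proposition~\ref{thm:ave} by considering these two components separately, which is precisely what you have written out.
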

\begin{proof}
	It follows immediately from Proposition \ref{thm:ave} by considering the component in plane and the normal component separately.
\end{proof}

\section{Mean curvature half density}\label{sec:7}

We are interested in parameterizing the space of infinitesimal conformal deformations in terms of the extrinsic geometry. Under an infinitesimal conformal deformation, we consider the \emph{change in mean curvature half-density} $\rho:V_{int} \to  \field{R}$
\[
\rho_i := \frac{1}{2} \sum_j \dot{\alpha}_{ij}|f_j - f_i|.
\]
This formula is closely related to the mean curvature arisen from Steiner's tube formula:
\begin{equation} \label{eq:tube}
	i \in V \mapsto \frac{1}{2} \sum_j \alpha_{ij} |f_j - f_i|
\end{equation}
Notice that $\rho$ is not exactly the derivative of equation \eqref{eq:tube} unless the deformation is isometric. In fact, equation \eqref{eq:tube} is a discrete analogue of $ H |df|^2$ while $\rho$ is analogous to the change in mean curvature half density $(\frac{d}{dt} \,H|df| )  |df|$. Here one might heuristically think of $\alpha$ as $H|df|$. In the smooth theory, mean curvature half density is used to parametrize conformal deformations of surfaces in space \cite{Richter1997,Crane2011}. 

As a corollary of Theorem \ref{thm:uZ}, we obtain the famous Schl\"{a}fli formula. It is a necessary condition for $\rho$ and later it turns out to be sufficient in non-degenerate cases. 

\begin{corollary}{(Scalar Schl\"{a}fli  Formula)}\label{prop:schafli}
	For every infinitesimal conformal deformation of a closed triangulated surface $f:V \to \field{R}^3$, we have 
	\[
	\sum_i \rho_i = \sum_{ij} \dot{\alpha}_{ij} |f_j-f_i| = 0
	\]
	where $\dot{\alpha}$ is the change in the dihedral angles.
\end{corollary}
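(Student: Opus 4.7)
The plan is to reduce the claim to the edge identity \eqref{eq:changedi} from Theorem \ref{thm:uZ} and then reorganize the sum over edges as a sum over faces, exploiting the fact that the oriented boundary of every triangle sums to zero.

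First I would expand the definition of $\rho$. Since each unoriented edge $\{ij\}$ is counted once from each of its two endpoints when summing over $i$ and then over the neighbors $j$, the factor $\tfrac{1}{2}$ exactly accounts for this double counting, giving
\[
\sum_i \rho_i \;=\; \sum_{\{ij\}\in E} \dot{\alpha}_{ij}\,|f_j-f_i|.
\]
For each unoriented edge $\{ij\}$, pick either orientation $e_{ij}$ with left face $\{ijk\}$ and right face $\{ilj\}$, and substitute equation \eqref{eq:changedi} from Theorem \ref{thm:uZ} to obtain
\[
\sum_i \rho_i \;=\; \sum_{\{ij\}\in E}\bigl\langle f_j-f_i,\; Z_{ijk}-Z_{ilj}\bigr\rangle.
\]
(One checks that swapping the chosen orientation simultaneously flips the edge vector and exchanges left and right faces, so each summand is orientation-independent.)

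Next I would swap the order of summation from edges to faces. Every face $T=\{ijk\}$ of the closed surface sits on the left of precisely three oriented edges, namely $e_{ij}, e_{jk}, e_{ki}$, and on the right of the three oppositely oriented edges. Collecting all appearances of a fixed $Z_T$, one sees that $Z_T$ enters the above sum with total coefficient
\[
(f_j-f_i) + (f_k-f_j) + (f_i-f_k) \;=\; 0,
\]
since the oriented boundary of a triangle closes up. Thus every face contributes zero and the total vanishes, proving $\sum_i\rho_i = 0$.

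The only real subtlety is the bookkeeping in the last step: one must verify that summing $\langle f_j-f_i, Z_{ijk}-Z_{ilj}\rangle$ over unoriented edges really does group into face contributions in which each $Z_T$ is paired with the (oriented) boundary sum of $T$. This is the main obstacle, but it is a purely combinatorial matter of tracking left/right face assignments under the chosen orientations, with no analytic content beyond \eqref{eq:changedi}. A tidy way to present it is to write the sum as $\tfrac{1}{2}\sum_{e\in\vec{E}}\langle f_j-f_i, Z_{L(e)}-Z_{R(e)}\rangle$, split into two pieces, and observe that in each piece every triangle contributes the cyclic sum of its edge vectors, which is zero.
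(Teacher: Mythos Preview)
Your proposal is correct and follows essentially the same route as the paper: invoke \eqref{eq:changedi} from Theorem \ref{thm:uZ} to rewrite $\dot{\alpha}_{ij}|f_j-f_i|$ as $\langle f_j-f_i, Z_{ijk}-Z_{ilj}\rangle$, then regroup the edge sum by faces so that each $Z_T$ is paired with the vanishing oriented boundary of its triangle. The paper's proof is more terse but identical in substance; your added remarks on orientation-independence and the edge-to-face bookkeeping are helpful elaborations rather than a different argument.
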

\begin{proof}
	Given an infinitesimal conformal deformation, Theorem \ref{thm:uZ} shows that there exists $(u,Z) \in \field{R}^v \times \field{R}^{3F}$ such that
	\[
	\rho_i  = \frac{1}{2}\sum_{j} \dot{\alpha}_{ij} |f_j-f_i| = \frac{1}{2}\sum_{j} \langle f_j-f_i,Z_{ijk}-Z_{jki}\rangle. 
	\]
	Hence we have
	\[
	\sum_{i} \rho_i = \sum_{ij \in E} \dot{\alpha}_{ij} |(f_j-f_i)| =  \sum_{ijk} \langle f_j-f_i+f_k-f_j+f_i-f_k,Z_{ijk}\rangle =0.
	\] 
\end{proof}

\begin{remark}
	Scalar Schl\"{a}fli  Formula holds for general infinitesimal deformations of closed triangulated surfaces \cite{Pak2010}. 
\end{remark}

\section{Discrete Dirac operator}\label{sec:8}

A heuristic counting of the degrees of freedom indicate that the change in mean curvature half-density is a good candidate to parametrize infinitesimal conformal deformations extrinsically. Notice that:
\begin{enumerate}
	\item The space of infinitesimal conformal deformations of a triangulated sphere is at least $3|V| - (|E|-|V|) = |V|+6$. 
	\item Similarity transformations are trivial conformal deformations that preserve dihedral angles. The space of similarity transformations has dimension $7$.
	\item The space of functions $\rho: V \to \mathbb{R}$ with $\sum_i \rho_i =0$ is of dimension $|V|-1= (|V|+6) - 7$
\end{enumerate}
Hence it is reasonable to ask if, given a function $\rho: V \to \mathbb{R}$ with $\sum_i \rho_i =0$, does there exist an infinitesimal conformal deformation unique up to a similarity transformation with $\rho$ as the change in mean curvature half-density? The answer is positive as stated in Theorem \ref{thm:givenrho}. We follow the proof in the smooth theory \cite{Richter1997} by developing a discrete Dirac operator $\D$ for closed triangulated surfaces.

\begin{definition} \label{def:dirac}
	Given a realization of a triangulated surface $f:V \to \mathbb{R}^3$, we associate each interior edge the two dimensional subspace in $\mathbb{R}^3$ that is perpendicular to $f_j-f_i$. It forms a normal bundle $\mathcal{N}_f$ over the edges. A \emph{section} of $\mathcal{N}_f$ is a function $W:E \to \mathcal{N}_f$ such that $W_{ij}=W_{ji}$ is perpendicular to $f_j-f_i$. The space of sections are denoted by $\Gamma(\mathcal{N}_f)$, which is of dimension $2|E|$ as a vector space. We write $f_{ijk}$ to represent the circumcenter of the face $\{ijk\}$ and have
	\[
	f_{ijk}-f_{ilj}=(\frac{\cot \angle jki}{2}N_{ijk}+\frac{\cot \angle ilj}{2}N_{ilj}) \times (f_j-f_i)
	\]
	We define the \textbf{discrete Dirac operator} 
	\begin{align*}
		\D:\field{R}^{V} \times \field{R}^{3F} &\to \field{R}^{V_{int}} \times \Gamma(\mathcal{N}_f) \\
		(u,Z) &\mapsto (\rho,U)		
	\end{align*}
	where
	\begin{align*}
		\rho_i &= \frac{1}{2}\sum_j \langle f_j-f_i,Z_{ijk}-Z_{ilj}\rangle,  \\
		U_{ij}&=-(f_j-f_i)\times (Z_{ijk}-Z_{ilj})+(f_{ijk}-f_{ilj})(u_j-u_i) = U_{ji}.
	\end{align*}
	and $\{ijk\}$,$\{jil\}$ are the left and the right triangles of the oriented edge from $i$ to $j$.
\end{definition}

This definition follows from Theorem \ref{thm:uZ}. Its name is motivated from the smooth theory \cite{Crane2011}. In the following we derive the adjoint of the discrete Dirac operator on closed triangulated surfaces.

\begin{definition} We denote $\langle \cdot,\cdot \rangle$ the Euclidean product in $\mathbb{R}^3$. On a closed triangulated surface, we further define inner products on $\mathbb{R}^{V} \times \Gamma(\mathcal{N}_f)$ and $\mathbb{R}^{V} \times \mathbb{R}^{3F}$ respectively. For $(\alpha,W),(\tilde{\alpha},\tilde{W}) \in \mathbb{R}^{V} \times \Gamma(\mathcal{N}_f)$ where $W_{ij}=W_{ji} $ is a vector perpendicular to $f_j-f_i$, then
	\[
	\big( (\alpha,W), (\tilde{\alpha}, \tilde{W}) \big) := \sum_{i \in V} \alpha_i \tilde{\alpha}_i + \sum_{ij \in E} \langle W_{ij}, \tilde{W}_{ij} \rangle.
	\]
	The sums are respectively over all vertices and over all unoriented edges. For $(\beta,Y),(\tilde{\beta},\tilde{Y}) \in \mathbb{R}^{V} \times \mathbb{R}^{3F}$ where $Y,\tilde{Y}:F \to \mathbb{R}^3$,
	\[
	\big( (\beta,Y), (\tilde{\beta}, \tilde{Y}) \big) := \sum_{i \in V} \beta_i \tilde{\beta}_i + \sum_{ijk \in F} \langle Y_{ijk}, \tilde{Y}_{ijk} \rangle.
	\]
\end{definition}

With the inner products, the adjoint discrete Dirac operator $\D^*:\mathbb{R}^{V} \times \Gamma(\mathcal{N}_f) \to \field{R}^{V} \times \mathbb{R}^{3F}$ is defined as the map satisfying 
\[
\big((u,Z),\D^*(\alpha,W)\big) = \big(\D(u,Z),(\alpha,W)\big)
\]
for $(u,Z) \in \field{R}^{V} \times \mathbb{R}^{3F}$ and $(\alpha,W) \in \mathbb{R}^{V} \times \Gamma(\mathcal{N}_f)$.

\begin{proposition}\label{prop:adD}
	The adjoint discrete Dirac operator has an explicit form: 
	\begin{align*}
		\D^*: \mathbb{R}^{V} \times \Gamma(\mathcal{N}_f) &\to \field{R}^{V} \times \field{R}^{3F} \\
		(\alpha, W) &\mapsto (\tilde{\rho},Y)
	\end{align*}
	where $\tilde{\rho}:V \to \mathbb{R}$ and $Y:F\to \mathbb{R}^3$ are defined by
	\begin{align*}
		\tilde{\rho}_i =& -\sum_{j } \langle f_{ijk} - f_{ilj},W_{ij}\rangle \\
		Y_{ijk} =& (f_j-f_i) \times W_{ij}  + (f_k-f_j) \times W_{jk} + (f_i-f_k) \times W_{ki} \\ & + \frac{\alpha_i + \alpha_j}{2} (f_j-f_i) + \frac{\alpha_j + \alpha_k}{2} (f_k-f_j) + \frac{\alpha_k + \alpha_i}{2} (f_k-f_j)
	\end{align*}
	and $f_{ijk}$, $f_{ilj}$ are the circumcenters of the left and the right faces of the oriented edge from $i$ to $j$.
\end{proposition}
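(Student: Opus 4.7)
The plan is to verify the adjoint identity $\big((u,Z),\D^*(\alpha,W)\big) = \big(\D(u,Z),(\alpha,W)\big)$ by a direct expansion of the right-hand side, then to read off $\tilde{\rho}$ by collecting coefficients of each $u_i$ and to read off $Y_{ijk}$ by collecting coefficients of each $Z_{ijk}$. Since the two inner products are non-degenerate, matching coefficients in this way uniquely determines $\D^*$, so the proposition reduces to checking that the $(\tilde{\rho},Y)$ thus obtained agrees with the claimed formulas.

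Starting from the definition, the right-hand side splits as $\sum_i \alpha_i \rho_i + \sum_{ij\in E}\langle U_{ij},W_{ij}\rangle$. For the vertex sum I first note that each summand $\langle f_j-f_i,Z_{ijk}-Z_{ilj}\rangle$ is invariant under $i \leftrightarrow j$ (both $f_j-f_i$ and the difference $Z_{ijk}-Z_{ilj}$ flip sign), so passing from the vertex sum $\tfrac{1}{2}\sum_i \alpha_i\sum_j(\cdots)$ to an unoriented-edge sum yields $\tfrac{1}{2}\sum_{ij\in E}(\alpha_i+\alpha_j)\langle f_j-f_i,Z_{ijk}-Z_{ilj}\rangle$. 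For the edge sum I substitute the definition of $U_{ij}$ and split into a $Z$-part and a $u$-part. In the $u$-part I convert $\sum_{ij\in E}(u_j-u_i)\langle f_{ijk}-f_{ilj},W_{ij}\rangle$ into a sum over oriented edges and regroup by vertex, noting that each $u_i$ picks up equal contributions from the two orientations of each incident edge; this produces exactly $\tilde{\rho}_i = -\sum_j\langle f_{ijk}-f_{ilj},W_{ij}\rangle$. In the $Z$-part I apply the cyclic identity $\langle a\times b,c\rangle = \langle a, b\times c\rangle$ to rewrite $-\langle (f_j-f_i)\times(Z_{ijk}-Z_{ilj}),W_{ij}\rangle$ as $\langle Z_{ijk}-Z_{ilj},(f_j-f_i)\times W_{ij}\rangle$.

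Adding the two contributions to $Z$ gives an edge sum of the form $\sum_{ij\in E}\bigl\langle Z_{ijk}-Z_{ilj},\,(f_j-f_i)\times W_{ij} + \tfrac{\alpha_i+\alpha_j}{2}(f_j-f_i)\bigr\rangle$. I then regroup by faces: a given $Z_{ijk}$ appears as the ``left-face'' term on each of the three edges $\{ij\},\{jk\},\{ki\}$ (oriented consistently with the face), so its total coefficient is exactly the sum of the three corresponding bracketed vectors, namely $Y_{ijk}$. This matches the claimed formula.

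The computation is essentially linear algebra, so there is no deep obstacle; the main bookkeeping issue to watch is the passage between oriented and unoriented edges, where one must consistently track the left/right-face convention $\{ijk\}/\{ilj\}$ and verify that each expression summed over $E$ is symmetric under $i\leftrightarrow j$ (so that the unoriented sum makes sense) before regrouping. Once those symmetries are checked, the derivation proceeds in one pass and delivers both $\tilde\rho$ and $Y$ simultaneously.
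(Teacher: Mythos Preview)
Your proposal is correct and follows essentially the same approach as the paper: both expand the defining identity $\big(\D(u,Z),(\alpha,W)\big)$ directly, then regroup the vertex sum with $\alpha$ and the edge sum with $W$ (the latter split into its $u$- and $Z$-parts) to read off $\tilde\rho_i$ and $Y_{ijk}$. The paper organizes the computation by separately pairing with $(\alpha,0)$ and $(0,W)$, but the algebra and the regrouping-by-faces step are the same as yours.
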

\begin{proof}
	Direct computations yield
	\begin{align*}
		((u,Z),\D^*(\alpha,0)) =& \sum_{i \in V} \frac{\alpha_i}{2} \sum_j \langle f_j-f_i,Z_{ijk}-Z_{ilj}\rangle  \\
		=& \sum_{ijk} \langle Z_{ijk}, \frac{\alpha_i \!  + \! \alpha_j}{2} (f_j-f_i) \! +\!   \frac{\alpha_j \! + \!  \alpha_k}{2} (f_k-f_j) \! + \!  \frac{\alpha_k \! +  \alpha_i}{2} (f_i -f_k) \\
		((u,0),\D^*(0,W)) =& \sum_{ij \in E} \langle (f_{ijk}-f_{ilj}) (u_j-u_i), W_{ij}\rangle  \\
		=& -\sum_{i \in V}  u_i \sum_{j} \langle f_{ijk}-f_{ilj},W_{ij}\rangle , \\                                    
		((0,Z),\D^*(0,W)) =& \sum_{ij \in E} \langle -(f_j-f_i) \times Z_{ijk}-Z_{ilj}, W_{ij}\rangle  \\
		=& \sum_{ijk} -\langle Z_{ijk}, W_{ij} \times (f_j-f_i) + W_{jk} \times (f_k-f_j)+ W_{ki} \times (f_i -f_k)\rangle
	\end{align*}
	By linearity, we obtain the formula as stated. 
\end{proof}

\begin{lemma}
	On a closed triangulated surface, we have $3|E| = 2 |F|$ and
	\begin{align*}
		\dim(\Im \D) &= \dim(\Im\D^*), \\
		\dim(\Ker \D) &= \dim(\Ker \D^*), \\
		(\Ker D^*)^{\bot} &= \Im D.
	\end{align*}
\end{lemma}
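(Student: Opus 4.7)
The plan is to reduce all three assertions to elementary finite-dimensional linear algebra, after verifying one combinatorial count. First I would note that for a closed triangulated surface each edge bounds exactly two triangles and each triangle has exactly three edges, so double-counting incident edge–face pairs gives $3|F| = 2|E|$ (the identity ``$3|E| = 2|F|$'' in the statement appears to be a typographical transposition). Since the surface is closed, $V_{int} = V$, and since $\mathcal{N}_f$ is a rank-two bundle indexed by the unoriented edges, $\dim \Gamma(\mathcal{N}_f) = 2|E|$. Consequently the domain and codomain of $\D$ have the same dimension:
\[
\dim\bigl(\field{R}^{V} \times \field{R}^{3F}\bigr) \;=\; |V| + 3|F| \;=\; |V| + 2|E| \;=\; \dim\bigl(\field{R}^{V_{int}} \times \Gamma(\mathcal{N}_f)\bigr).
\]

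Next, Proposition \ref{prop:adD} exhibits $\D^*$ as the genuine adjoint of $\D$ with respect to the two inner products introduced above. Both inner products are positive definite: the Euclidean inner product on $\field{R}^V \times \field{R}^{3F}$ is manifestly so, and on $\Gamma(\mathcal{N}_f)$ each fiber is a Euclidean $2$-plane inheriting the restricted inner product from $\field{R}^3$. Hence the standard four-fundamental-subspaces identities for an adjoint between finite-dimensional inner product spaces apply, giving
\[
\Im \D \;=\; (\Ker \D^*)^{\bot}, \qquad \Im \D^* \;=\; (\Ker \D)^{\bot}.
\]
The first of these is precisely the third claim of the lemma.

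Finally, I would combine these orthogonal-complement identities with the rank–nullity theorem. From $\Im \D = (\Ker \D^*)^{\bot}$ one obtains
\[
|V| + 3|F| - \dim \Ker \D \;=\; \dim \Im \D \;=\; (|V| + 2|E|) - \dim \Ker \D^*,
\]
and the combinatorial identity $3|F| = 2|E|$ collapses this to $\dim \Ker \D = \dim \Ker \D^*$. Applying the same argument to $\D^*$ (or simply subtracting the equal kernel dimensions from the equal dimensions of the two ambient spaces) yields $\dim \Im \D = \dim \Im \D^*$. There is essentially no obstacle: the only substantive input is the adjoint relation supplied by Proposition \ref{prop:adD} together with the dimension count $3|F| = 2|E|$; everything else is a direct invocation of the rank–nullity theorem and the standard identification of the image with the orthogonal complement of the cokernel.
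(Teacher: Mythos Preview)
Your argument is correct. The paper states this lemma without proof, treating the three linear-algebraic identities as standard consequences of the adjoint relation in Proposition~\ref{prop:adD}; your write-up supplies exactly that standard justification (including the observation that the printed relation $3|E|=2|F|$ is a transposition of the intended $3|F|=2|E|$, which is what is actually needed to match the dimensions of the domain and codomain of $\D$).
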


\begin{lemma} The kernel of $D^*$ contains constant functions
	\begin{equation*}
		\{ (\alpha, W^{\dagger}) \in \field{R}^{V} \times \Gamma(\mathcal{N}_f) | \alpha \in \field{R}, W \in \field{R}^3\} \subset \Ker \D^*
	\end{equation*}
	where $W_{ij}^{\dagger}$ is the component of $W_{ij}$ orthogonal to $f_j-f_i$. In particular \[ \dim(\Ker \D^*) \geq 4. \]
\end{lemma}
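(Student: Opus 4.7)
The plan is to verify directly, using the explicit formula for $\D^*$ from Proposition \ref{prop:adD}, that both constant functions $\alpha \in \field{R}$ (paired with $W = 0$) and constant vectors $W \in \field{R}^3$ (projected edge-wise to give $W^{\dagger}$, paired with $\alpha = 0$) lie in $\Ker \D^*$. Each case reduces to a short telescoping argument on the faces or on the one-ring around each vertex.

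First, I take $(\alpha,0)$ with $\alpha$ constant. The $\tilde{\rho}$-component vanishes automatically because $W\equiv 0$. For the $Y$-component I compute
\[
Y_{ijk} = \tfrac{\alpha_i+\alpha_j}{2}(f_j-f_i) + \tfrac{\alpha_j+\alpha_k}{2}(f_k-f_j) + \tfrac{\alpha_k+\alpha_i}{2}(f_i-f_k) = \alpha\bigl((f_j-f_i)+(f_k-f_j)+(f_i-f_k)\bigr)=0,
\]
since the three edge vectors of a triangle sum to zero. Hence $(\alpha,0)\in\Ker\D^*$, giving a $1$-dimensional subspace.

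Next, I take $(0,W^{\dagger})$ for a constant vector $W\in\field{R}^3$, where $W^{\dagger}_{ij}$ denotes the component of $W$ orthogonal to $f_j-f_i$. The key observation is that $(f_j-f_i)\times W^{\dagger}_{ij} = (f_j-f_i)\times W$, since the component of $W$ parallel to $f_j-f_i$ is killed by the cross product. Therefore in each face
\[
Y_{ijk} = \bigl((f_j-f_i)+(f_k-f_j)+(f_i-f_k)\bigr)\times W = 0.
\]
For $\tilde{\rho}$, I use the identity
\[
f_{ijk}-f_{ilj} = \Bigl(\tfrac{\cot\angle jki}{2}N_{ijk}+\tfrac{\cot\angle ilj}{2}N_{ilj}\Bigr)\times(f_j-f_i),
\]
stated in Definition \ref{def:dirac}, to conclude that $f_{ijk}-f_{ilj}$ is itself orthogonal to $f_j-f_i$, so $\langle f_{ijk}-f_{ilj},W^{\dagger}_{ij}\rangle = \langle f_{ijk}-f_{ilj},W\rangle$. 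Summing around the one-ring of an interior vertex $i$ then telescopes:
\[
\tilde{\rho}_i = -\Bigl\langle \sum_{j}(f_{ijk}-f_{ilj}),\,W\Bigr\rangle = 0,
\]
because the circumcenters around $i$ form a closed cyclic sequence. This yields a $3$-dimensional family parametrized by $W\in\field{R}^3$.

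Finally, I need to see that the four families together span a $4$-dimensional subspace, i.e.\ that the map $(\alpha,W)\mapsto (\alpha\cdot\mathbf{1},W^{\dagger})$ is injective. If $W^{\dagger}\equiv 0$ then $W$ is parallel to every edge $f_j-f_i$; since a non-degenerate closed triangulated surface has edges spanning $\field{R}^3$, this forces $W=0$, and then $\alpha=0$. The main (mild) obstacle is just keeping track of the orthogonality convention: $W^{\dagger}$ is the edgewise projection, and what makes everything work is that both $Y_{ijk}$ and the inner-product summand depend only on the component of $W_{ij}$ that survives after pairing with $f_j-f_i$ via a cross product or via $f_{ijk}-f_{ilj}$. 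No deeper structure is needed beyond Proposition \ref{prop:adD} and the telescoping identities.
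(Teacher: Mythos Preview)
Your proof is correct and follows essentially the same approach as the paper: direct computation using Proposition~\ref{prop:adD}, with the same telescoping arguments on faces and one-rings, and the same injectivity argument at the end. One small imprecision: for the injectivity step you claim the edges span $\field{R}^3$, but what you actually need (and what the paper uses) is only that a single non-degenerate face has two non-parallel edges, which already forces $W=0$.
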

\begin{proof}
	Let $\alpha$ be a constant function on vertices and $W$ be a constant vector in $\mathbb{R}^3$. We write $(\tilde{\rho},Y):= \D^*(\alpha, W^{\dagger})$ and have
	\begin{align*}
		\tilde{\rho}_i =& -\sum_{j} \langle f_{ijk}-f_{ilj},W\rangle =0, \\
		Y_{ijk} =& 
		(f_j-f_i) \times W  + (f_k-f_j) \times W + (f_i-f_k)) \times W \\& + \frac{\alpha + \alpha}{2} (f_j-f_i) + \frac{\alpha+ \alpha}{2} (f_k-f_j) + \frac{\alpha + \alpha}{2} (f_i-f_k))  \\
		=&0.																	
	\end{align*}
	
	If $W^\dagger\equiv 0$ for a constant vector $W$, it implies $W$ is parallel to all edges of $f$. Since $f$ is non-degenerate, we have $W\equiv 0$. Hence, the nullity of $\D^*$ is at least 4. 
\end{proof} 

\begin{lemma} \label{lemma:sumzero}
	If $\dim(\Ker \D)=4$, then for all $(\rho,U) \in \field{R}^V \times \Gamma(\mathcal{N}_f)$,
	\[
	\sum_{i \in V} \rho_i =0 \text{ and } \sum_{ij\in E} U_{ij} =0 \Leftrightarrow (\rho,U) \in \Im\D.
	\]
\end{lemma}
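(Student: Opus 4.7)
The plan is to reduce the statement to the orthogonality identity $(\Ker \D^*)^{\perp} = \Im \D$ from the previous lemma, and then compute the pairing with the explicit four-dimensional subspace of $\Ker \D^*$ exhibited earlier.

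First, the forward implication ($\Rightarrow$): if $(\rho,U) = \D(u,Z)$, then $(\rho,U)$ is orthogonal to every element of $\Ker \D^*$. In particular, for any $\alpha_0 \in \field{R}$ and $W \in \field{R}^3$, the pair $(\alpha_0, W^{\dagger})$ lies in $\Ker \D^*$ by the preceding lemma. Writing out the inner product,
\[
\big((\rho,U),(\alpha_0,W^{\dagger})\big) = \alpha_0 \sum_{i \in V} \rho_i + \sum_{ij \in E} \langle W^{\dagger}_{ij}, U_{ij}\rangle = 0.
\]
Since $U_{ij} \perp f_j-f_i$ by definition of $\Gamma(\mathcal{N}_f)$, we have $\langle W^{\dagger}_{ij}, U_{ij}\rangle = \langle W, U_{ij}\rangle$, so the above reduces to $\alpha_0 \sum_i \rho_i + \bigl\langle W, \sum_{ij} U_{ij}\bigr\rangle = 0$. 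Taking $\alpha_0 = 1, W = 0$ yields $\sum_i \rho_i = 0$, and then varying $W \in \field{R}^3$ yields $\sum_{ij} U_{ij} = 0$.

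For the reverse implication ($\Leftarrow$), I would use the hypothesis $\dim \Ker \D = 4$ for the first time. Combined with $\dim \Ker \D = \dim \Ker \D^*$ from the previous lemma and the lower bound $\dim \Ker \D^* \geq 4$, this forces
\[
\Ker \D^* = \{(\alpha_0, W^{\dagger}) : \alpha_0 \in \field{R}, \, W \in \field{R}^3\}.
\]
Now suppose $(\rho,U) \in \field{R}^V \times \Gamma(\mathcal{N}_f)$ satisfies $\sum_i \rho_i = 0$ and $\sum_{ij} U_{ij} = 0$. The same computation as above shows
\[
\big((\rho,U),(\alpha_0,W^{\dagger})\big) = \alpha_0 \sum_{i}\rho_i + \bigl\langle W, \sum_{ij} U_{ij}\bigr\rangle = 0
\]
for every $(\alpha_0,W^{\dagger}) \in \Ker \D^*$. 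Hence $(\rho,U) \in (\Ker \D^*)^{\perp} = \Im \D$, completing the equivalence.

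There is no serious obstacle: the genuine content was already packed into the identification of the 4-dimensional subspace of $\Ker \D^*$ and into the adjoint/range identity $(\Ker \D^*)^{\perp} = \Im \D$. The only subtlety worth checking carefully is that the tangential component of $W$ contributes nothing to the pairing because $U_{ij}$ lies in the normal bundle; this is what allows the $3|E|$-dimensional pairing to collapse to the three scalar constraints $\sum_{ij} U_{ij} = 0 \in \field{R}^3$, matching the dimension count that motivated the theorem.
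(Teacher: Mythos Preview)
Your proof is correct and follows essentially the same route as the paper: the reverse implication is identical, and for the forward implication the paper cites the Schl\"afli identity and a direct edge-sum computation, whereas you obtain both conditions uniformly from orthogonality to the explicit elements $(\alpha_0,W^{\dagger})\in\Ker\D^*$, which is a minor but clean variation. Your observation that $\langle W^{\dagger}_{ij},U_{ij}\rangle=\langle W,U_{ij}\rangle$ because $U_{ij}\in\Gamma(\mathcal{N}_f)$ is exactly the point that makes the pairing collapse to the three scalar conditions.
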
 
\begin{proof}
	Suppose there exists $(u,Z) \in \field{R}^{V+3F}$ such that $D(u,Z) = (\rho,U)$. Then  Proposition \ref{prop:schafli} implies $\sum_{i \in V} \rho_i =0$ while $ \sum_{ij \in E} U_{ij} =0$ follows from direct computation.
	
	Conversely, the assumption implies $\dim(\Ker \D^*)=\dim(\Ker \D)=4$ and thus $\Ker \D^*$ contains constant functions only. Therefore, 
	\[
	\sum_{i \in V} \rho =0 \text{ and } \sum_{ij \in E} U_{ij} =0
	\]
	yields $(\rho,U) \in (\Ker\D^*)^{\perp} = \Im\D$. 
\end{proof}

We are now ready to prove Theorem \ref{thm:givenrho}.
\begin{proof}[Proof of Theorem \ref{thm:givenrho}]
	Notice that infinitesimal translations correspond to $u \equiv 0, Z  \equiv 0$ in Theorem \ref{thm:uZ}. Furthermore infinitesimal rotations are given by $u \equiv 0, Z \equiv const.$ while uniform scaling by $u \equiv const., Z \equiv 0$.
	
	Given a function $\rho:V \to \mathbb{R}$ with $\sum_i \rho_i =0$, then $(\rho,0) \in \Im\D$ and hence there exists $(u,Z) \in \mathbb{R}^{V}\times \mathbb{R}^{3F}$ such that $D(u,Z)= (\rho,0)$. The assumption $\dim \Ker D=4$ implies $u,Z$ are unique up to a constant. Thus with Theorem \ref{thm:uZ}, we conclude that there exists an infinitesimal conformal deformation satisfying
	\[
	\rho_i = \frac{1}{2}\sum_j \dot{\alpha}_{ij} |f_j-f_i|.
	\] 
	The deformation is unique up to a similarity transformation. 
\end{proof}

We finish the discussion on the adjoint discrete Dirac operator with three corollaries. The first is a dual of Theorem \ref{thm:uZ} in the sense that infinitesimal conformal deformations can be described in terms of $D^*$ in the same way as that of $D$. This result should be expected in the smooth theory because the Dirac operator is self-adjoint \cite{Kamberov1998}. 

\begin{corollary} \label{cor:dualuw}
	Let $\dot{f}:V \to \field{R}^3$ be an infinitesimal conformal deformation of a  realization $f:V \to \field{R}^3$ with scale factor $u:V \to \field{R}$. Then there exists an element $W \in \Gamma(\mathcal{N}_f)$, i.e. $W_{ij} \in (f_j-f_i)^{\perp} \subset \mathbb{R}^3$ such that
	\begin{align}\label{eq:uwfdot}
		\dot{f}_j - \dot{f}_i &= \frac{u_i + u_j}{2} (f_j-f_i) + (f_j-f_i) \times W_{ij}
	\end{align}
	Furthermore, we have $D^*(u,W)=(\rho,0)\in \mathbb{R}^V \times \mathbb{R}^{3F}$ where 
	\[
	\rho_i = \frac{1}{2}\sum_j \dot{\alpha}_{ij} |f_j-f_i|
	\]
	and $\dot{\alpha}$ is the change in dihedral angles.
	
	Conversely, if the triangulated surface is simply connected and functions $u,W$ satisfy $D^*(u,W)=(\rho,0)$ for some $\rho$, then there exists an infinitesimal conformal deformation $\dot{f}$ unique up to translation given via \eqref{eq:uwfdot}.
\end{corollary}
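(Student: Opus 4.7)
The forward direction starts by invoking Theorem~\ref{thm:uZ} to produce a vector field $Z:F\to\field{R}^3$ satisfying~\eqref{eq:infinlambda}. The two expressions in~\eqref{eq:infinlambda} say that the vectors $Z_{ijk}+\frac{\cot\angle jki}{2}(u_j-u_i)N_{ijk}$ and $Z_{ilj}+\frac{\cot\angle ilj}{2}(u_i-u_j)N_{ilj}$ differ only by a multiple of $f_j-f_i$, and hence share the same orthogonal projection onto $(f_j-f_i)^{\perp}$. I would define $W_{ij}$ to be this common projection; this gives a section $W\in\Gamma(\mathcal{N}_f)$ with $W_{ij}=W_{ji}$, and since $(f_j-f_i)\times V=(f_j-f_i)\times V^{\perp}$ for any $V$, equation~\eqref{eq:uwfdot} follows at once from~\eqref{eq:infinlambda}.

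To verify $\D^*(u,W)=(\rho,0)$, I would check the vector part $Y$ and the scalar part $\tilde{\rho}$ from Proposition~\ref{prop:adD} separately. For $Y_{ijk}$: substituting~\eqref{eq:uwfdot} term-by-term into the explicit formula identifies each summand with one of $\dot{f}_j-\dot{f}_i,\ \dot{f}_k-\dot{f}_j,\ \dot{f}_i-\dot{f}_k$, so $Y_{ijk}$ telescopes to zero. For $\tilde{\rho}_i$: writing $M_{ij}:=\frac{\cot\angle jki}{2}N_{ijk}+\frac{\cot\angle ilj}{2}N_{ilj}$, the identity $f_{ijk}-f_{ilj}=M_{ij}\times(f_j-f_i)$ together with the scalar triple product yields
\[
-\langle f_{ijk}-f_{ilj},W_{ij}\rangle \;=\; -\langle M_{ij},(f_j-f_i)\times W_{ij}\rangle \;=\; -\langle M_{ij},\dot{f}_j-\dot{f}_i\rangle,
\]
where the last step uses $\langle M_{ij},f_j-f_i\rangle=0$ (both face normals are perpendicular to the shared edge). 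The remaining identity $-\sum_j\langle M_{ij},\dot{f}_j-\dot{f}_i\rangle=\rho_i$ is the main computational obstacle. My plan is to substitute~\eqref{eq:infinlambda} for $\dot{f}_j-\dot{f}_i$, rearrange the sum face-by-face using the cotangent-weighted decomposition of $f_j-f_i$ that appeared in the derivation of~\eqref{eq:con3final}, and match the result to the formula $\rho_i=\frac{1}{2}\sum_j\langle f_j-f_i,Z_{ijk}-Z_{ilj}\rangle$ from Definition~\ref{def:dirac}. As a cross-check, this scalar identity is equivalent to combining Proposition~\ref{thm:ave} with~\eqref{eq:changedi}.

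For the converse, suppose $(u,W)\in\field{R}^V\times\Gamma(\mathcal{N}_f)$ satisfies $\D^*(u,W)=(\rho,0)$ for some $\rho$. I will define a primal 1-form $\eta:\vv{E}\to\field{R}^3$ by the right-hand side of~\eqref{eq:uwfdot}. For each face $\{ijk\}$, the sum $\eta(e_{ij})+\eta(e_{jk})+\eta(e_{ki})$ is exactly the quantity $Y_{ijk}$ of Proposition~\ref{prop:adD}, which vanishes by hypothesis; hence $\eta$ is closed. Since the triangulated surface is simply connected, $\eta$ is exact, producing $\dot{f}:V\to\field{R}^3$ with $d\dot{f}=\eta$, unique up to a translation. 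Taking the inner product of~\eqref{eq:uwfdot} with $f_j-f_i$ and using $W_{ij}\perp(f_j-f_i)$ confirms that $\dot{f}$ is conformal with scale factor $u$, completing the proof.
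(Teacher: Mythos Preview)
Your proposal is correct and follows the paper's overall architecture: obtain $W$ as the edge-orthogonal projection of the face rotation data from Theorem~\ref{thm:uZ}, read off $Y_{ijk}=0$ from closedness of $d\dot f$, and then verify the scalar component. The one place where you and the paper diverge is precisely the step you flag as the ``main computational obstacle''. Rather than pass through $-\sum_j\langle M_{ij},\dot f_j-\dot f_i\rangle$ and then rearrange face-by-face, the paper exploits the circumcenter directly: since $f_{ijk}-f_i$ projects onto $f_j-f_i$ with length $\tfrac12|f_j-f_i|$, one has
\[
\tfrac12\,\dot\alpha_{ij}\,|f_j-f_i|=\Big\langle \dot\alpha_{ij}\tfrac{f_j-f_i}{|f_j-f_i|},\,f_{ijk}-f_i\Big\rangle,
\]
then replaces $\dot\alpha_{ij}\tfrac{f_j-f_i}{|f_j-f_i|}$ by the difference of the two un-projected rotation vectors via~\eqref{eq:vectorexact}, and telescopes the sum over the faces around $i$ to land exactly on $-\sum_j\langle f_{ijk}-f_{ilj},W_{ij}\rangle$. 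This is shorter and avoids the cotangent bookkeeping your plan would require; your route would also work, but the circumcenter telescoping is the clean shortcut. Your converse argument (closedness of $\eta$ from $Y_{ijk}=0$, exactness by simple connectivity, conformality from $W_{ij}\perp f_j-f_i$) is correct and in fact more explicit than the paper, which leaves that direction unwritten.
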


\begin{proof}
	Given an infinitesimal conformal deformation, the existence and uniqueness of $W$ is immediate. For every face $\{ijk\}$, we have
	\begin{align*}
		0 =& \dot{f}_j -\dot{f}_i + \dot{f}_j -\dot{f}_i + \dot{f}_j -\dot{f}_i 
	\end{align*}
	which implies $D^*(u,W) = (\rho,0)$ for some $\rho: V_{int} \to \field{R}$. On the other hand, as a result of Section \ref{sec:5}, there exists a vector field $Z \to \field{R}^3$ on faces such that
	\[
	W_{ij} = \big(Z_{ijk} + \frac{\cot \angle jki}{2} \, (u_j - u_i) N_{ijk} \big)^{\dagger} =  \big(Z_{ilj} + \frac{\cot \angle ilj}{2} \, (u_i - u_j) N_{ilj}\big)^{\dagger} = W_{ji}
	\]
	where $\dagger$ denotes the component orthogonal to $(f_j-f_i)$. We denote the circumcenter of $\{ijk\}$ as $f_{ijk}$. We have 
	\begin{align*}
		\frac{1}{2}\sum_{j} \dot{\alpha}_{ij} |f_j-f_i| =& \sum_{j} \langle \dot{\alpha}_{ij} \frac{f_j-f_i}{|f_j-f_i|}, f_{ijk} - f_i \rangle \\
		=& \sum_{j} \big( \langle Z_{ijk} + \frac{\cot \angle jki}{2} \, (u_j - u_i) N_{ijk}, f_{ijk} - f_i \rangle \\ &\quad - \langle Z_{ilj} + \frac{\cot \angle ilj}{2} \, (u_i - u_j) N_{ilj}, f_{ijk} - f_i \rangle \big) \\
		=& -\sum_{j} \langle Z_{ilj} + \frac{\cot \angle ilj}{2} \, (u_i - u_j) N_{ilj}, (f_{ijk} - f_i) - (f_{ilj} - f_i) \rangle \\
		=& \rho_i
	\end{align*} 
	as claimed. 
\end{proof}

In contrast to Proposition \ref{prop:givenu}, not every function $u$ can be realized as the scale factor of some infinitesimal conformal deformation in space for infinitesimally flexible surfaces.

\begin{corollary}
	Suppose $f:V \to \mathbb{R}^3$ is a closed surface with a nontrivial infinitesimal isometric deformation $\bar{f}$ having $\bar{\rho}:V \to \mathbb{R}$ the change in mean curvature half density. Then for any infinitesimal conformal deformation in space with scale factor $u$, we have
	\[
	\sum_i \bar{\rho}_i u_i =0.
	\]
\end{corollary}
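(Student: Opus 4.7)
The plan is to exploit the adjointness between $\D$ and $\D^*$. Both deformations will be encoded so that $\D(u,Z)$ has its $\Gamma(\mathcal{N}_f)$-component vanishing and $\D^*(0,\bar W)$ has its $\mathbb R^{3F}$-component vanishing. Then the adjoint pairing $\bigl(\D(u,Z),(0,\bar W)\bigr)=\bigl((u,Z),\D^*(0,\bar W)\bigr)$ collapses on one side to $0$ and on the other side to $\sum_i u_i \bar\rho_i$.

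First I would encode the conformal deformation. By Theorem \ref{thm:uZ}, the infinitesimal conformal deformation with scale factor $u$ produces a face vector field $Z:F\to\mathbb{R}^3$ satisfying \eqref{eq:infinlambda}, \eqref{eq:imDD}, and \eqref{eq:changedi}. Unpacking $\D(u,Z)=(\rho,U)$ from Definition \ref{def:dirac}, the first component $\rho_i=\tfrac12\sum_j \langle f_j-f_i, Z_{ijk}-Z_{ilj}\rangle$ equals, by \eqref{eq:changedi}, $\tfrac12\sum_j \dot\alpha_{ij}|f_j-f_i|$, which is exactly the change in mean curvature half-density of the conformal deformation. For the second component, the formula $f_{ijk}-f_{ilj}=(\tfrac{\cot\angle jki}{2}N_{ijk}+\tfrac{\cot\angle ilj}{2}N_{ilj})\times(f_j-f_i)$ combined with \eqref{eq:imDD} (which says $(f_j-f_i)\times(Z_{ijk}-Z_{ilj})$ equals $-(u_j-u_i)(f_j-f_i)\times(\tfrac{\cot\angle jki}{2}N_{ijk}+\tfrac{\cot\angle ilj}{2}N_{ilj})$) makes the two terms defining $U_{ij}$ cancel, so $U\equiv0$. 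Hence $\D(u,Z)=(\rho,0)$ in $\mathbb R^{V}\times\Gamma(\mathcal{N}_f)$.

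Next I would encode the isometric deformation. Since $\bar f$ is isometric, it has scale factor $\bar u\equiv 0$. Applying Corollary \ref{cor:dualuw} to $\bar f$ yields $\bar W\in\Gamma(\mathcal{N}_f)$ with $\D^*(0,\bar W)=(\bar\rho,0)\in\mathbb R^{V}\times\mathbb R^{3F}$, where $\bar\rho$ is the change in mean curvature half-density of $\bar f$.

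Finally I would apply the adjoint relation on the closed surface:
\[
\bigl((u,Z),\,\D^*(0,\bar W)\bigr)=\bigl(\D(u,Z),\,(0,\bar W)\bigr).
\]
The left-hand side equals $\sum_{i\in V} u_i\bar\rho_i+\sum_{ijk\in F}\langle Z_{ijk},0\rangle=\sum_i u_i\bar\rho_i$, while the right-hand side equals $\sum_{i\in V}\rho_i\cdot 0+\sum_{ij\in E}\langle 0,\bar W_{ij}\rangle=0$. Comparing gives $\sum_i \bar\rho_i u_i=0$, as desired. The only technical point, and the main place to be careful, is the cancellation $U\equiv0$ in the first step; everything else is pure bookkeeping once the two deformations are placed in $\Ker$ of the appropriate component of $\D$ and $\D^*$.
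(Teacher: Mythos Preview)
Your proof is correct and follows essentially the same route as the paper: encode the conformal deformation as $(u,Z)$ with $\D(u,Z)=(\rho,0)$ via Theorem \ref{thm:uZ}, encode the isometric deformation via Corollary \ref{cor:dualuw} as $(0,\bar W)$ with $\D^*(0,\bar W)=(\bar\rho,0)$, and pair them using the adjoint relation. Your explicit verification that $U\equiv 0$ is a bit more detailed than the paper's (which simply asserts $\D(u,Z)=(\rho,0)$), but otherwise the arguments coincide.
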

\begin{proof}
	Using Corollary \ref{cor:dualuw}, there exists $\bar{W} \in \Gamma(\mathcal{N}_f)$ such that
	\[
	\bar{f}_j - \bar{f}_i = (f_j - f_i) \times \bar{W}_{ij} 
	\]
	with $D^*(0,\bar{W}) = (\bar{\rho}, 0)$. For any other infinitesimal conformal deformation in space, we represent it as $(u,Z)$ as in Theorem \ref{thm:uZ} where $u$ is the scale factor and $Z$ is the rotation field on faces. Its change in mean curvature half density $\rho$ satisfies $D(u,Z)= (\rho,0)$. Then
	\begin{align*}
		\sum_i \bar{\rho}_i u_i = \big( D^*(0,W), (u,Z) \big) = \big( (0,W), D(u,Z) \big) = \big( (0,W), (\rho,0) \big) = 0.
	\end{align*} 
\end{proof}

The third corollary reflects that the discrete Dirac operator is the ``square root" of the cotangent Laplacian. Its proof is straightforward by applying Definition \ref{def:dirac} and Proposition \ref{prop:adD}.

\begin{corollary}
	For any real-valued function $\alpha : V \to \field{R}$, we have $\D \frac{1}{A}\D^*(\alpha,0) = ( \rho, U)$ where
	\begin{align*}
		\rho_i &= -\sum_{j}  (\cot \angle jki + \cot \angle ilj) (\alpha_j - \alpha_i), \\
		U_{ij} &= (\alpha_j-\alpha_i)(N_{ijk}-N_{ilj}).
	\end{align*}
	Here $A:F\to \field{R}$ is the signed area of the corresponding triangle under the realization and $\{ijk\},\{ilj\}$ are the left and the right faces of the oriented edge from $i$ to $j$.
\end{corollary}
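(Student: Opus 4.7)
The proof is essentially a direct computation of $\D \frac{1}{A}\D^*(\alpha, 0)$: the plan is to substitute into the two explicit formulas (Definition \ref{def:dirac} and Proposition \ref{prop:adD}) and then collapse each face contribution using standard cotangent identities.

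First I would compute $\D^*(\alpha,0) = (0,Y)$. With $W \equiv 0$ the vertex part vanishes and the face part, after collecting the three terms of Proposition \ref{prop:adD} by $\alpha_i,\alpha_j,\alpha_k$, telescopes to
\[
Y_{ijk} = \tfrac{1}{2}\bigl[(\alpha_j - \alpha_i)(f_k - f_i) - (\alpha_k - \alpha_i)(f_j - f_i)\bigr],
\]
which is $A_{ijk}$ times the $90^\circ$-rotated piecewise-linear gradient of $\alpha$ on the triangle. Dividing by $A$ gives a clean face vector field $Z := Y/A$, affine in $\alpha$, that can be fed into $\D$.

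Next I would apply $\D$ to $(0,Z)$ via Definition \ref{def:dirac}. For the edge component only the cross-product term survives (since $u \equiv 0$), and the identity $(f_j - f_i)\times(f_k - f_i) = 2 A_{ijk} N_{ijk}$ immediately collapses $(f_j - f_i)\times Z_{ijk}$ to $(\alpha_j - \alpha_i) N_{ijk}$; the analogous computation on the right face gives $(\alpha_j - \alpha_i)N_{ilj}$, and subtracting yields the stated formula for $U_{ij}$ up to the global orientation sign. For the vertex component $\rho_i = \tfrac12\sum_j\langle f_j - f_i, Z_{ijk} - Z_{ilj}\rangle$, I would reindex by the triangles at $i$: if $T_a = \{i, j_a, j_{a+1}\}$ enumerates the incident triangles in cyclic order, a one-step telescoping rewrites the sum as $\sum_a \langle f_{j_a} - f_{j_{a+1}}, Z_{T_a}\rangle$. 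On each $T_a$ the inner products $\langle f_j - f_k, f_k - f_i\rangle$ and $\langle f_j - f_k, f_j - f_i\rangle$ equal $\mp 2 A_{T_a}\cot(\cdot)$ of the two angles of $T_a$ opposite to the edges of $T_a$ emanating from $i$. Dividing by $A_{T_a}$ leaves pure cotangents, and regrouping by edges so that each $\{ij\}$ inherits the two cotangents from its two incident triangles produces exactly the coefficient $-(\cot\angle jki + \cot\angle ilj)$ of $(\alpha_j - \alpha_i)$.

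The hard part will not be any single identity but the combinatorial bookkeeping: keeping the left/right face conventions straight for every oriented edge, matching the angles produced by the inner-product identities to the named angles $\angle jki$ and $\angle ilj$, and avoiding double-counting when regrouping the telescoped sum by edges. Once that indexing is pinned down, every remaining step is a one-line substitution.
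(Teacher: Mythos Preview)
Your proposal is correct and is exactly the approach the paper intends: the paper gives no detailed argument and simply states that the corollary follows ``straightforward by applying Definition \ref{def:dirac} and Proposition \ref{prop:adD},'' which is precisely the two-step computation (compute $Y$, divide by $A$, then feed into $\D$) that you outline. Your observation that the $U_{ij}$ formula comes out with an overall sign is also accurate; the discrepancy is with the stated sign in the corollary, not with your method.
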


\section{Examples} \label{sec:9}

We consider triangulated surfaces with vertices on the unit sphere. We investigate their infinitesimal conformal deformations and the change in mean curvature half density. 

\begin{proposition}[\cite{Lam2016}] \label{prop:infiso}
	Every infinitesimal isometric deformation of a triangulated surface with vertices on the sphere satisfies for every interior vertex $i$
	\[
	\sum_j \dot{\alpha}_{ij} |f_j-f_i| = 0. 
	\] 
\end{proposition}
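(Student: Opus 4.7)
The plan is to deduce this from the angular velocity equation (Proposition \ref{thm:ave}) by exploiting the constraint that all vertices lie on the unit sphere. Since an isometric deformation has scale factor $u \equiv 0$, the angular velocity equation at an interior vertex $i$ collapses to the vector identity
\[
\sum_j \dot{\alpha}_{ij} \frac{f_j - f_i}{|f_j - f_i|} = 0.
\]
This is a vector equation in $\field{R}^3$, whereas the desired conclusion is a scalar equation, so the natural idea is to pair it against a well-chosen vector using the inner product.

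The choice of vector is $f_i$ itself. The key observation is that if $|f_i|^2 = |f_j|^2 = 1$, then
\[
\langle f_j - f_i, f_i \rangle = \langle f_j, f_i \rangle - 1 = -\tfrac{1}{2} |f_j - f_i|^2,
\]
using $|f_j - f_i|^2 = 2 - 2\langle f_i, f_j \rangle$. Taking the inner product of the angular velocity identity with $f_i$ therefore converts each summand $\dot{\alpha}_{ij}\,(f_j - f_i)/|f_j - f_i|$ into $-\tfrac{1}{2}\,\dot{\alpha}_{ij}\,|f_j - f_i|$, so that the vanishing of the vector sum becomes
\[
-\tfrac{1}{2} \sum_j \dot{\alpha}_{ij}\, |f_j - f_i| = 0,
\]
which is exactly the assertion.

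There is no real obstacle here: once one invokes Proposition \ref{thm:ave} in the isometric case $u \equiv 0$, the computation is a one-line consequence of the sphere condition. The only subtlety worth flagging is that the deformation need not preserve the sphere condition infinitesimally (vertices may leave $S^2$ to first order); however, the statement concerns only the undeformed configuration $f$, so only $|f_i| = 1$ at time zero is used. Thus the whole argument is simply: apply Proposition \ref{thm:ave} with $u \equiv 0$, take the $\field{R}^3$-inner product of the resulting identity with $f_i$, and invoke $\langle f_j - f_i, f_i \rangle = -\tfrac{1}{2}|f_j - f_i|^2$.
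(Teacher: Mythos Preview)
Your argument is correct. The paper itself does not supply a proof of this proposition; it is simply quoted from \cite{Lam2016}. Your derivation, by contrast, obtains it directly from the angular velocity equation (Proposition~\ref{thm:ave}) developed in the present paper: in the isometric case $u\equiv 0$ one has $\sum_j \dot{\alpha}_{ij}\,(f_j-f_i)/|f_j-f_i|=0$, and pairing with $f_i$ together with the sphere identity $\langle f_j-f_i,f_i\rangle=-\tfrac12|f_j-f_i|^2$ yields the scalar statement. This is clean and self-contained, and your remark that only the condition $|f_i|=|f_j|=1$ at time zero is used (not any tangency of the deformation to the sphere) is well taken.
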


\begin{example}\label{example:jessen}
	
	Jessen's orthogonal icosahedron \cite{Jessen1967} is combinatorially a regular icosahedron with some edges flipped (Figure \ref{fig:Jessen}). Its vertices lie on a sphere and all dihedral angles are either $\pi/2$ or $3\pi/2$. It is a well known example as a non-convex triangulated sphere that is infinitesimally flexible \cite{Lam2016}. Under the non-trivial infinitesimal isometric deformation, the change in the mean curvature half density vanishes by Proposition \ref{prop:infiso}. Hence $\dim(\Ker\D)>4$.
	\begin{figure}[h]
		\centering
		\includegraphics[width=0.3\textwidth]{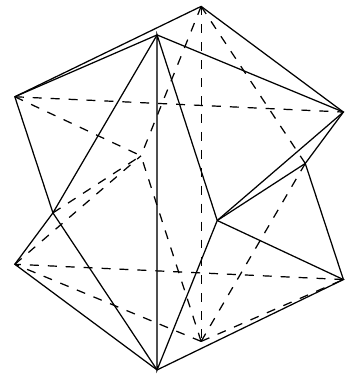}
		\caption{Jessen's orthogonal icosahedron}
		\label{fig:Jessen}
	\end{figure}
\end{example}

\begin{example}\label{example:Bricard}
	Figure \ref{fig:Bricard} shows one of Bricard's octahedra inscribed in a sphere. It is flexible and therefore Proposition \ref{prop:infiso} implies $\dim(\Ker\D)>4$.
	\begin{figure}[h]
		\centering
		\includegraphics[width=0.66\textwidth]{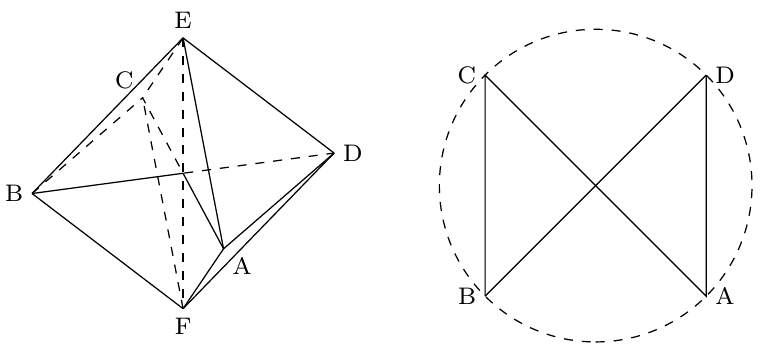}
		\caption{Bricard's octahedron and its top view}
		\label{fig:Bricard}
	\end{figure}
\end{example}

Dehn's theorem states that convex polyhedra are infinitesimally rigid, i.e. there is no infinitesimal isometric deformation other than Euclidean motion. However, one could still consider conformal deformations. For inscribed polyhedra, their infinitesimal conformal deformations are exactly given by normal deformations.

\begin{proposition}
	Suppose $f:V \to S^2$ is an inscribed triangulated surface. For every $u:V \to \field{R}$, the infinitesimal deformation $\dot{f} = u f$ is conformal with scale factor $u$ and the change in mean curvature half-density is
	\[
	\sum_j \dot{\alpha}_{ij} |f_j-f_i| = -\sum_{j} (u_j-u_i) (\frac{\cos d_{ijk} \cot \angle jki}{2} + \frac{\cos d_{ilj}\cot \angle{ilj}}{2}) \quad \forall i \in V_{int}
	\]
	where $d_{ijk}$ is the distance from the origin to the face $\{ijk\}$.
\end{proposition}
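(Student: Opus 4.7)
The proof will proceed in two stages: first verifying that $\dot{f} = uf$ is conformal, then applying the Angular Velocity Equation (Proposition~\ref{thm:ave}) to extract the change in mean curvature half density.

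First, I check that $\dot{f}_i := u_i f_i$ is an infinitesimal conformal deformation with scale factor $u$. Since the vertices lie on the unit sphere, $|f_i|^2 = |f_j|^2 = 1$ gives two useful identities: $|f_j - f_i|^2 = 2 - 2\langle f_i,f_j\rangle$ and, crucially,
\[
\langle f_j, f_j-f_i\rangle = 1 - \langle f_i,f_j\rangle = \tfrac{1}{2}|f_j-f_i|^2, \qquad \langle f_i, f_j-f_i\rangle = -\tfrac{1}{2}|f_j-f_i|^2.
\]
Expanding $\langle \dot{f}_j-\dot{f}_i, f_j-f_i\rangle = u_j\langle f_j, f_j-f_i\rangle - u_i\langle f_i, f_j-f_i\rangle$ and substituting gives $\tfrac{u_i+u_j}{2}|f_j-f_i|^2$, confirming conformality.

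Next, I apply Proposition~\ref{thm:ave} at an interior vertex $i$:
\[
\sum_j \dot{\alpha}_{ij}\,\frac{f_j-f_i}{|f_j-f_i|} = \sum_j (u_j - u_i)\Bigl(\frac{\cot \angle jki}{2}N_{ijk} + \frac{\cot \angle ilj}{2}N_{ilj}\Bigr).
\]
The key move is to take the Euclidean inner product of both sides with $f_i$. On the left side, using the identity $\langle f_j-f_i, f_i\rangle = -\tfrac{1}{2}|f_j-f_i|^2$ derived above, the unit edge vector becomes a scaled edge length:
\[
\Bigl\langle \frac{f_j-f_i}{|f_j-f_i|}, f_i \Bigr\rangle = -\tfrac{1}{2}|f_j-f_i|,
\]
so the left side collapses to $-\tfrac{1}{2}\sum_j \dot{\alpha}_{ij}|f_j-f_i|$. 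On the right side, I identify $\langle N_{ijk}, f_i\rangle$ geometrically: since $N_{ijk}$ is the unit normal to the face plane $\{ijk\}$ and $f_i$ lies in that plane with $|f_i|=1$, the inner product equals the signed distance from the origin to the face plane, namely $\cos d_{ijk}$. Multiplying both sides by $-2$ yields the stated formula.

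The proof is essentially a direct application of Proposition~\ref{thm:ave} paired with the correct test vector. I expect no serious obstacle; the only subtlety is recognizing that pairing the Angular Velocity Equation against $f_i$ (rather than some other vector) is precisely what converts the left-hand side's unit vectors into edge lengths $|f_j-f_i|$ while simultaneously producing the distances $\cos d_{ijk}$ on the right. Sign conventions for $N_{ijk}$ (outward versus inward normal relative to the origin) determine whether $\cos d_{ijk}$ is positive or negative, and must be tracked consistently with the orientation convention fixed in Section~\ref{sec:2}.
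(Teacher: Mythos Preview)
Your argument is correct and takes a genuinely different route from the paper. The paper writes down the face rotation field explicitly,
\[
Z_{ijk} = -\frac{\cos d_{ijk}}{4 A_{ijk}}\bigl(u_i(f_k-f_j)+u_j(f_i-f_k)+u_k(f_j-f_i)\bigr),
\]
and then evaluates $\sum_j\langle f_j-f_i,\,Z_{ijk}-Z_{ilj}\rangle$ using \eqref{eq:changedi}. You instead bypass $Z$ altogether: Proposition~\ref{thm:ave} has already absorbed $Z$, and pairing that identity with the position vector $f_i$ uses the inscribed hypothesis exactly once, via $\langle f_j-f_i,f_i\rangle=-\tfrac12|f_j-f_i|^2$ and $\langle N_{ijk},f_i\rangle=\cos d_{ijk}$. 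This is the cleaner argument; the paper's route requires carrying $\cos d_{ijk}$ through the entire construction of $Z$ and regrouping an edge sum into a face sum.

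One caution on your last line. Multiplying by $-2$ actually yields
\[
\sum_j \dot{\alpha}_{ij}\,|f_j-f_i| \;=\; -\sum_j (u_j-u_i)\bigl(\cos d_{ijk}\cot\angle jki+\cos d_{ilj}\cot\angle ilj\bigr),
\]
which is the stated formula \emph{without} the factors $\tfrac12$ on the right. Your computation is in fact the correct one: the paper's explicit $Z_{ijk}$ is off by a factor of $2$, as one checks by pairing the identity $\tfrac{u_j-u_i}{2}(f_j+f_i)=(f_j-f_i)\times Z_{ijk}+\tfrac{\cot\angle jki}{2}(u_j-u_i)(f_j-f_i)\times N_{ijk}$ with $N_{ijk}$ (the left side gives $(u_j-u_i)\cos d_{ijk}$, the right side only half of that). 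Equivalently, the printed right-hand side equals $\rho_i=\tfrac12\sum_j\dot\alpha_{ij}|f_j-f_i|$ rather than $2\rho_i$.
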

\begin{proof}
	Let $u:V \to \field{R}$ and $\dot{f}= u f$. We have
	\begin{align*}
		\langle \dot{f}_j - \dot{f}_i, f_j -f_i \rangle = u_j + u_i - (u_j + u_i) \langle f_j, f_i \rangle = \frac{u_j + u_i}{2} |f_j - f_i|^2.
	\end{align*}
	and hence $\dot{f}$ is an infinitesimal conformal deformation with scalec factor $u$. In terms of Theorem \ref{thm:uZ}, such a deformation is given by $Z:F \to \field{R}^3$
	\[
	Z_{ijk} = -\frac{\cos d_{ijk}}{4 A_{ijk}}  \big(u_i (f_k-f_j) +u_j (f_i-f_k)+u_k (f_j-f_i)\big)
	\]
	where $A_{ijk}$ is the area of the triangle $\{ijk\}$. For every interior vertex $i$, the change in mean curvature half-density is
	\begin{align*}
		\sum_j \dot{\alpha}_{ij} |f_j-f_i| =& \sum_{j} \langle (f_j-f_i), Z_{ijk}- Z_{ilj} \rangle \\
		=&  -\sum_{j} (u_j-u_i) (\frac{\cos d_{ijk} \cot \angle jki}{2} + \frac{\cos d_{ilj}\cot \angle{ilj}}{2})
	\end{align*} 
\end{proof}

\begin{example}
	The regular octahedron has $\dim(\Ker\D)=4$, that means it does not possess non-trivial infinitesimal conformal deformation with vanishing change in mean curvature half-density. On one hand, all its infinitesimal isometric deformations are trivial since it is convex. On the other hand, for each edge $\{ij\}$, the coefficient $(\cos d_{ijk} \cot \angle jki+ \cos d_{ilj}\cot \angle{ilj})/2$ is strictly positive. If there is an infinitesimal conformal deformation such that $\sum_j \dot{\alpha}_{ij} |f_j-f_i| =0 $ for all vertices, then the scale factor $u$ must be constant and the deformation is induced by a similarity transformation since the octahedron is infinitesimally rigid.  
\end{example}

Applying Gluck's argument \cite[Theorem 6.1]{Gluck1975} to the discrete Dirac operator, the set of non-degenerate triangulated spheres in $\mathbb{R}^3$ with $\dim \Ker \D=4$ is the complement of a real algebraic variety in $\mathbb{R}^{3V}$. Hence the set is open and dense as long as the algebraic variety is proper.

\begin{corollary}\label{cor:givenrho}
	For almost all octahedra in $\field{R}^3$, given any $\rho:V \to \field{R}$ with $\sum_i \rho_i=0$, there exists an infinitesimal conformal deformation unique up to a similarity transformation with $\rho$ as the change in mean curvature half density.
\end{corollary}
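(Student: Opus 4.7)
The plan is to combine Theorem \ref{thm:givenrho} with a Gluck-style genericity argument specialized to the octahedral combinatorial type. Theorem \ref{thm:givenrho} already does all the analytic work: once one knows that the realization satisfies $\dim\Ker\D=4$, the conclusion about prescribing $\rho$ (modulo similarity) follows immediately. So the task reduces to showing that the locus of octahedral realizations with $\dim\Ker\D=4$ is open and dense in $\mathbb{R}^{3V}=\mathbb{R}^{18}$.

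First I would fix the combinatorial type of the octahedron and view the discrete Dirac operator $\D$ as a linear map whose matrix entries are polynomial functions of the vertex coordinates $f\in\mathbb{R}^{3V}$; indeed, from Definition \ref{def:dirac} the entries of $\D$ are built from differences $f_j-f_i$, cross products, and circumcenter vectors (which are themselves rational in the vertex coordinates after clearing a common denominator given by twice the triangle areas). Because the dimension formulas give $\dim(\Im\D)=\dim(\Im\D^*)$, the condition $\dim\Ker\D=4$ is equivalent to $\mathrm{rank}(\D)$ attaining its maximum possible value. This in turn is equivalent to the non-vanishing of at least one minor of the appropriate size, so the set
\[
\mathcal{B}:=\{f\in\mathbb{R}^{3V}\mid \dim\Ker\D(f)>4\text{ or }f\text{ is degenerate}\}
\]
is contained in a real algebraic variety cut out by the simultaneous vanishing of all such minors together with the polynomials expressing face degeneracy.

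The second step is to verify that this variety is proper, i.e. that there exists at least one non-degenerate octahedral realization $f_0$ with $\dim\Ker\D(f_0)=4$. This is exactly the content of the preceding example: the regular octahedron is non-degenerate, is infinitesimally rigid by Dehn's theorem, and the strictly positive coefficients $(\cos d_{ijk}\cot\angle jki+\cos d_{ilj}\cot\angle ilj)/2$ force the only infinitesimal conformal deformation with vanishing $\rho$ to come from a similarity, so $\dim\Ker\D=4$ there. Hence the defining polynomials of $\mathcal{B}$ do not vanish identically on $\mathbb{R}^{18}$, so $\mathcal{B}$ is a proper real algebraic subvariety; its complement is therefore open and dense.

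Finally I would conclude: for any octahedral realization $f$ lying in this open dense complement, Theorem \ref{thm:givenrho} applies directly and produces, for every $\rho:V\to\mathbb{R}$ with $\sum_i\rho_i=0$, an infinitesimal conformal deformation realizing $\rho$ as change in mean curvature half density, unique up to a similarity transformation. The only subtle point—and the one I would check carefully—is the rationality/polynomiality issue in writing $\D$ as a polynomial matrix: the circumcenters introduce denominators equal to twice the signed face areas, but these are exactly the non-degeneracy conditions, so on the open locus of non-degenerate realizations everything is analytic and the Gluck argument via minors goes through unchanged. No new calculation beyond invoking the regular octahedron as a witness is needed.
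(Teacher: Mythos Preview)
Your proposal is correct and is essentially identical to the paper's own argument: the paper states (in the paragraph immediately preceding the corollary) that Gluck's argument applied to $\D$ shows the set $\{\dim\Ker\D=4\}$ is the complement of a real algebraic variety in $\mathbb{R}^{3V}$, uses the regular octahedron example as the witness that this variety is proper, and then invokes Theorem~\ref{thm:givenrho}. Your remark about clearing the area denominators to get honest polynomials is a useful detail that the paper leaves implicit.
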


\section{Triangulated surfaces of high genus} \label{sec:infinhighdisc}

\begin{lemma}\label{lem:hodge}
	For a closed triangulated surface of genus $g$, there exists $2g$ closed dual 1-forms $\omega_1, \omega_2, \dots, \omega_{2g} : \vv{E}^* \to \field{R}$ such that a closed primal 1-form $\eta: \vv{E} \to \field{R}$ is exact if and only if for $k=1,2,\dots, 2g$
	\[
	\sum_{ij \in E} \omega_k(e^*_{ij})\eta(e_{ij}) =0.
	\]
\end{lemma}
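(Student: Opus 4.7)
The plan is to recognize this lemma as a discrete Poincar\'e duality statement between primal and dual first cohomology on a closed triangulated surface. I would first set up notation: let $Z^1,B^1$ denote the spaces of closed and exact primal $1$-forms on $M$, and $Z^1_*,B^1_*$ the analogous spaces of dual $1$-forms on $M^*$. A short rank computation using $V-E+F=2-2g$ together with the connectedness of $M$ and $M^*$ gives $\dim(Z^1/B^1)=E-F-V+2=2g=\dim(Z^1_*/B^1_*)$.

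I would then introduce the bilinear pairing
\[
\langle\omega,\eta\rangle:=\sum_{ij\in E}\omega(e^*_{ij})\,\eta(e_{ij}),
\]
which is well-defined because flipping the orientation of an edge flips both factors. Summation by parts shows it descends to a pairing on the two quotient spaces: if $\eta=df$, regrouping by vertices gives $\langle\omega,df\rangle=-\sum_{i\in V} f_i\sum_{j}\omega(e^*_{ij})=0$, where the inner sum vanishes because closedness of $\omega$ on the dual face around the primal vertex $i$ reads exactly $\sum_j\omega(e^*_{ij})=0$. A symmetric calculation covers the case when $\omega$ is exact and $\eta$ is closed.

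The hard part is proving that the induced pairing on the two $2g$-dimensional quotient spaces is non-degenerate. Since the dimensions agree, it is enough to check injectivity on one side, and I plan to do this by exhibiting $2g$ explicit closed dual cocycles. Choose a standard basis of $H_1(M;\mathbb{Z})$ represented by $2g$ simple closed curves $\gamma_1,\dots,\gamma_{2g}$ and, after a small homotopy, realize each $\gamma_k$ as a simple closed edge-path in $M^*$. Let $\omega_k$ be the $\pm 1$ indicator dual $1$-form along this path; closedness is immediate. Then $\langle\omega_k,\eta\rangle$ equals the intersection pairing of $\eta$ with $[\gamma_k]$, which in turn equals the period of $\eta$ along a primal $1$-cycle Poincar\'e-dual to $[\gamma_k]$. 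Since these $2g$ classes span $H_1(M;\field{R})$ and a closed primal $1$-form whose periods on a basis of $H_1$ all vanish must be exact, the map $\eta\mapsto(\langle\omega_k,\eta\rangle)_k$ is injective on $Z^1/B^1$, hence an isomorphism.

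Taking these $\omega_1,\dots,\omega_{2g}$ then yields the lemma: any closed $\eta$ with all pairings zero is exact by the injectivity just established, and conversely every exact $\eta$ pairs trivially with every closed dual $1$-form by summation by parts. A purely algebraic alternative to the non-degeneracy step is to set up the primal and dual cellular chain complexes as $\field{R}$-duals of each other under the above pairing and invoke a combinatorial Poincar\'e--Lefschetz duality; this avoids any appeal to smooth surface topology at the cost of somewhat more bookkeeping.
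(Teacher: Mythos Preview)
Your argument is essentially correct, but you should know that the paper does not prove this lemma at all: immediately after the statement it simply says that the $\omega_k$ are \emph{harmonic $1$-forms} and that the lemma ``follows from the Hodge decomposition of discrete differential forms'' with a citation to Desbrun et al. So you are supplying a self-contained proof where the paper invokes an external result.

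The two routes are genuinely different. The paper's route presupposes an inner product on discrete forms, a discrete Hodge star identifying primal and dual cochains, and the resulting Hodge decomposition $C^1=B^1\oplus\mathcal{H}^1\oplus d^*C^2$; the $\omega_k$ are then harmonic representatives, and the equivalence ``closed $\eta$ is exact $\Leftrightarrow$ $\eta\perp\mathcal{H}^1$'' is read off directly. Your route avoids all of this analytic machinery: you compute $\dim H^1=\dim H^1_*=2g$ from the Euler characteristic, check by summation by parts that the pairing descends to cohomology, and then argue non-degeneracy via explicit dual cycles and Poincar\'e duality. This is more elementary and more topological; it also makes clear that \emph{any} $2g$ closed dual $1$-forms whose classes span $H^1_*$ will do, not just harmonic ones.

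One step in your write-up deserves a little more care. You assert that $\langle\omega_k,\eta\rangle$ equals the period of $\eta$ along a primal cycle Poincar\'e-dual to $[\gamma_k]$. The primal edges dual to the edges of $\gamma_k$ do not themselves form a primal cycle, so this identification is not literal; what is true is that the pairing realises the intersection form $H^1(M)\times H^1(M^*)\to\field{R}$, whose non-degeneracy is combinatorial Poincar\'e duality. Your final paragraph already names this cleaner alternative (set up the primal and dual cochain complexes as $\field{R}$-duals under the edge pairing and invoke Poincar\'e--Lefschetz); I would promote that to the main argument and drop the heuristic about periods, or else spell out explicitly why the values $\langle\omega_k,\eta\rangle$ for $k=1,\dots,2g$ determine the class $[\eta]$.
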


The dual 1-forms $\omega_i$ are called \textbf{harmonic 1-forms} and the lemma follows from the Hodge decomposition of discrete differential forms \cite{Desbrun2006a}.

\begin{theorem} \label{thm:highgendis}
	Suppose $f:M \to \field{R}^3$ is a realization of a closed triangulated surface of genus $g$ with $\dim(\Ker \D)=4$. Let $\omega_1,\ldots,\omega_{2g}$ form a basis of harmonic 1-forms and $\mathbf{e}_1 := (1,0,0), \mathbf{e}_2 = (0,1,0)$,$\mathbf{e}_3 = (0,0,1)$ be the standard basis for $\field{R}^3$. Then for every $k=1,2,\ldots,2g$ and $l=1,2,3$, 
	there exists $(u_{kl},Z_{kl}) \in \field{R}^{V} \times \mathbb{R}^{3F}$ unique up to a constant such that $\D (u_{kl},Z_{kl}) = (\alpha_{kl}, W_{kl}) \in \mathbb{R}^{V} \times \Gamma(\mathcal{N}_f)$ where
	\begin{align*}
		\alpha_{kl,i} &= \sum_{j} \omega_k(e^*_{ij}) \langle  \mathbf{e}_l, f_j-f_i \rangle, \\
		W_{kl,ij} &= \omega_k(e^*_{ij}) \mathbf{e}_l \times  (f_j-f_i).
	\end{align*}
	
	Furthermore, given $(u,Z)\in \field{R}^{V+3F}$ with $\D(u,Z)=(\rho,0)$, the $\field{R}^3$-valued primal 1-form
	\begin{align*} 
		\eta(e_{ij}) &:= \frac{u_i + u_j}{2} (f_j-f_i) + (f_j-f_i) \times (Z_{ijk} + \frac{\cot \angle jki}{2} \, (u_j - u_i) N_{ijk}) \\
		&= \frac{u_i + u_j}{2} (f_j-f_i) + (f_j-f_i) \times (Z_{ilj} + \frac{\cot \angle ilj}{2} \, (u_i - u_j) N_{ilj})
	\end{align*} 
	is exact if and only if for every $k=1,2,\ldots,2g$ and $l=1,2,3$,
	\[
	\sum_{i} \rho_i u_{kl,i} =0. 
	\]
\end{theorem}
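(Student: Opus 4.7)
The plan is to prove the two assertions of the theorem separately: first the existence and uniqueness of $(u_{kl},Z_{kl})$, then the characterization of exactness of $\eta$ via the pairings $\sum_{i}\rho_{i}u_{kl,i}$. For the first assertion, I would show that $(\alpha_{kl},W_{kl})\in\Im\D$ by invoking Lemma~\ref{lemma:sumzero}, which under the hypothesis $\dim\Ker\D=4$ reduces the image condition to verifying $\sum_{i}\alpha_{kl,i}=0$ and $\sum_{\{ij\}\in E}W_{kl,ij}=0$. Both are immediate from the closeness of the dual $1$-form $\omega_{k}$: after writing each sum over oriented edges and regrouping by one endpoint, every term contains a factor $\sum_{j}\omega_{k}(e^{*}_{ij})=0$. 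Existence of a preimage $(u_{kl},Z_{kl})$ then follows, and since $\dim\Ker\D=4$ forces $\Ker\D=\{(c,v):c\in\mathbb{R},\ v\in\mathbb{R}^{3}\}$, uniqueness is up to an additive scalar in $u_{kl}$ and an additive constant vector in $Z_{kl}$.

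For the second assertion, Lemma~\ref{lem:hodge} applied componentwise tells us that the $\mathbb{R}^{3}$-valued closed $1$-form $\eta$ is exact if and only if the Hodge periods $P_{kl}:=\sum_{\{ij\}\in E}\omega_{k}(e^{*}_{ij})\langle\mathbf{e}_{l},\eta(e_{ij})\rangle$ vanish for every $k$ and $l$. The heart of the proof is therefore the identification $P_{kl}=\sum_{i}\rho_{i}u_{kl,i}$. To establish it I would use Corollary~\ref{cor:dualuw} to produce $W\in\Gamma(\mathcal{N}_{f})$, obtained by projecting $Z_{ijk}+\tfrac{\cot\angle jki}{2}(u_{j}-u_{i})N_{ijk}$ onto $(f_{j}-f_{i})^{\perp}$, so that $\D^{*}(u,W)=(\rho,0)$, and then exploit the adjoint pairing
\begin{align*}
\sum_{i}\rho_{i}u_{kl,i}&=\bigl((u_{kl},Z_{kl}),\D^{*}(u,W)\bigr)=\bigl(\D(u_{kl},Z_{kl}),(u,W)\bigr)\\
&=\sum_{i}\alpha_{kl,i}u_{i}+\sum_{\{ij\}\in E}\langle W_{kl,ij},W_{ij}\rangle.
\end{align*}
Expanding $P_{kl}$ directly via the definition of $\eta$ and the scalar triple product identity $\langle\mathbf{e}_{l},(f_{j}-f_{i})\times W_{ij}\rangle=\langle W_{kl,ij},W_{ij}\rangle$ isolates the same cross-product term $\sum_{\{ij\}}\langle W_{kl,ij},W_{ij}\rangle$ together with a scalar sum involving $\omega_{k}(e^{*}_{ij})\tfrac{u_{i}+u_{j}}{2}\langle\mathbf{e}_{l},f_{j}-f_{i}\rangle$; a discrete integration by parts based on the closeness of $\omega_{k}$ should collapse this scalar sum onto $\sum_{i}\alpha_{kl,i}u_{i}$ and reconcile $P_{kl}$ with the adjoint identity.

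The main obstacle is exactly this final discrete integration by parts. One has to handle carefully the passages between sums over oriented and unoriented edges, repeatedly use $\sum_{j}\omega_{k}(e^{*}_{ij})=0$ to cancel vertex-localized contributions, and exploit that on a closed surface a closed dual $1$-form pairs to zero with any exact primal $1$-form such as $d(u\beta_{l})$, where $\beta_{l}:=\langle\mathbf{e}_{l},f\rangle$. Once the identity $P_{kl}=\sum_{i}\rho_{i}u_{kl,i}$ is secured, the claimed equivalence follows at once from Lemma~\ref{lem:hodge}, completing the proof.
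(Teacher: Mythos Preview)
Your proposal is correct and follows essentially the same route as the paper: verify $(\alpha_{kl},W_{kl})\in\Im\D$ via Lemma~\ref{lemma:sumzero} using closedness of $\omega_k$, then identify the Hodge periods $P_{kl}$ with $\bigl(\D(u_{kl},Z_{kl}),(u,W)\bigr)$ and pass to $\sum_i\rho_iu_{kl,i}$ through the adjoint and Corollary~\ref{cor:dualuw}. The only remark is that the ``main obstacle'' you flag is lighter than you suggest: the scalar sum $\sum_{\{ij\}}\omega_k(e^*_{ij})\tfrac{u_i+u_j}{2}\langle\mathbf{e}_l,f_j-f_i\rangle$ collapses to $\sum_i u_i\,\alpha_{kl,i}$ by a one-line symmetrization (each unoriented edge contributes once from each endpoint), and closedness of $\omega_k$ is not actually needed at that step.
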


\begin{proof}
	We first show that $(\alpha_{kl},W_{kl}) \in \Im D$. Since $\omega_k$ is a closed dual 1-form, it satisfies for every vertex $i$
	\[
	\sum_j \omega_k(e^*_{ij}) =0
	\]
	and hence
	\begin{align*}
		\sum_{i \in V} \sum_{j}  \omega_k(e^*_{ij}) \langle \mathbf{e}_l,f_j-f_i\rangle = -2\sum_{i\in V} \big( \langle \mathbf{e}_l,f_i\rangle  \sum_{j} \omega_k(e^*_{ij}) \big)=0.
	\end{align*}
	On the other hand,
	\begin{align*}
		\sum_{ij \in E} \omega_k(e^*_{ij}) \mathbf{e}_l  \times  (f_j-f_i) = -\sum_{i \in V}  \big( \mathbf{e}_l \times f_i  \sum_{ij \in E:i} \omega_k(e^*_{ij}) \big)
		= 0.
	\end{align*}
	By Lemma \ref{lemma:sumzero}, the sums being zero imply the existence of $(u_{kl},Z_{kl})$ as claimed.
	
	Given $(u,Z)\in \field{R}^{V} \times \mathbb{R}^{3F}$ with $\D(u,Z)=(\rho,0)$, the 1-form $\eta$ is closed by Theorem \ref{thm:uZ}. We write $\eta(e_{ij}) = \frac{u_i + u_j}{2} (f_j-f_i) + (f_j-f_i) \times W_{ij}$ where $W_{ij} \perp (f_j-f_i)$. Lemma \ref{lem:hodge} implies that $\eta$ is exact if and only if for every $k=1,2,\ldots,2g$ and $l=1,2,3$,
	\begin{align*}
		0 &= \sum_{ij \in E} \omega(e^*_{ij}) \langle \eta(e_{ij}), \mathbf{e}_l \rangle \\
		&= \sum_{i \in V} \sum_{ij \in E: i} \langle\mathbf{e}_l, \omega_k(e^*_{ij}) (f_j-f_i)\rangle  u_i +  \sum_{ij \in E} \langle\omega_k(e^*_{ij})\mathbf{e}_l \times (f_j-f_i) , W_{ij}\rangle  \\
		&= (\D (u_{kl},Z_{kl}), (u,W)) \\
		&= ((u_{kl},Z_{kl}), \D^*(u,W)) \\
		&= ((u_{kl},Z_{kl}), (\rho,0)) \\
		&= \sum_{i \in V} \rho_i u_{kl,i}.
	\end{align*} 
\end{proof}

\section{Discussion} \label{sec:10}
Proposition \ref{prop:givenu} and Theorem \ref{thm:givenrho} provide two ways to parametrize infinitesimal conformal deformations of a triangulated surface in $\mathbb{R}^3$. For infinitesimally rigid triangulated spheres, one can use scale factors $u$ as intrinsic parameters. For triangulated spheres with $\dim \Ker \D =4$, one can use the change in mean curvature half-density as extrinsic parameters instead. One might be tempting to think that both variables together are sufficient to describe the infinitesimal conformal deformations of a general triangulated surface. However, this is not true.

\begin{definition}\cite{Lam2016}
	A realization $f:V \to \field{R}^3$ of a triangulated surface is called isothermic if there exists $k:E \to \field{R}$ such that for each interior vertex $i$
	\begin{align*}
		\sum_j k_{ij}|f_j - f_i|^2 &=0, \\
		\sum_j k_{ij} (f_j - f_i) &=0.
	\end{align*}
\end{definition}
\begin{proposition}\cite{Lam2016}
	A simply connected surface is isothermic if and only if it admits a non-trivial infinitesimal isometric deformation such that the change in mean curvature half-density vanishes.
\end{proposition}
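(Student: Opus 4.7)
The strategy is to match the two isothermic equations against the two constraints that an infinitesimal isometric deformation with vanishing change in mean curvature half-density has to satisfy. The bridge, provided by Theorem \ref{thm:uZ}, is that such a deformation is encoded by a single scalar function $\mu$ on edges.

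First, I apply Theorem \ref{thm:uZ} with scale factor $u \equiv 0$. An infinitesimal isometric deformation then has the form $\dot{f}_j - \dot{f}_i = (f_j - f_i) \times Z_{ijk}$ for some face field $Z : F \to \mathbb{R}^3$, and the compatibility condition \eqref{eq:imDD} reduces to $(f_j - f_i) \times (Z_{ijk} - Z_{ilj}) = 0$. Since $f$ is non-degenerate, so that $f_j - f_i \neq 0$ on every edge, there exists for each interior edge a unique $\mu_{ij} = \mu_{ji} \in \mathbb{R}$ with
\[
Z_{ijk} - Z_{ilj} = \mu_{ij}(f_j - f_i).
\]
Moreover, $\mu \equiv 0$ is equivalent to $Z$ being globally constant on faces, in which case $\dot{f}_j - \dot{f}_i = (f_j - f_i) \times Z$ gives an infinitesimal Euclidean motion; hence a non-trivial isometric deformation corresponds to a non-zero $\mu$.

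Next, I read off the two scalar constraints in terms of $\mu$. From \eqref{eq:changedi} we get $\dot{\alpha}_{ij}|f_j - f_i| = \langle f_j - f_i, \mu_{ij}(f_j - f_i) \rangle = \mu_{ij}|f_j - f_i|^2$, so
\[
\rho_i = \frac{1}{2} \sum_j \mu_{ij} |f_j - f_i|^2,
\]
and $\rho \equiv 0$ at every interior vertex is exactly the first isothermic equation. Conversely, given $\mu : E \to \mathbb{R}$, recovering a face field $Z$ with $Z_{ijk} - Z_{ilj} = \mu_{ij}(f_j - f_i)$ amounts to integrating the $\mathbb{R}^3$-valued dual 1-form $e^*_{ij} \mapsto \mu_{ij}(f_j - f_i)$. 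Because $M$ is simply connected, this is possible iff the form is closed, iff $\sum_j \mu_{ij}(f_j - f_i) = 0$ at every interior vertex, which is exactly the second isothermic equation.

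Combining these two observations, the assignment $\mu \leftrightarrow k$ provides a bijection between non-trivial infinitesimal isometric deformations of $f$ with $\rho \equiv 0$ (taken modulo Euclidean motions) and non-zero isothermic data $k : E \to \mathbb{R}$, proving the equivalence. The only step beyond routine bookkeeping is the triviality correspondence ``$Z$ globally constant $\Leftrightarrow$ $\dot{f}$ is a Euclidean motion'', which is a direct check using non-degeneracy of $f$.
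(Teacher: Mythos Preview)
Your argument is correct. The paper itself does not prove this proposition; it is quoted from \cite{Lam2016} without proof in Section~\ref{sec:10}. Your derivation is precisely the natural one using the machinery of this paper: Theorem~\ref{thm:uZ} with $u\equiv 0$ reduces the compatibility condition to $(f_j-f_i)\times(Z_{ijk}-Z_{ilj})=0$, giving the edge scalar $\mu$; equation~\eqref{eq:changedi} then identifies $\rho_i$ with $\tfrac{1}{2}\sum_j \mu_{ij}|f_j-f_i|^2$, and the existence of $Z$ on a simply connected surface is exactly the closedness condition $\sum_j \mu_{ij}(f_j-f_i)=0$ for the dual $1$-form. The identification of triviality ($\mu\equiv 0 \Leftrightarrow Z$ constant $\Leftrightarrow$ Euclidean motion) is also correct, using non-degeneracy of each face to force $Z_{ijk}=c$ from $(f_j-f_i)\times(Z_{ijk}-c)=0$ on two independent edges. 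One small point worth making explicit: the definition of isothermic tacitly requires $k\not\equiv 0$, which you have correctly matched with the non-triviality of the deformation.
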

As a remark, isothermic surfaces are singularities of the space of conformal realizations. They are analogues to infinitesimally flexible surfaces in the case of isometric realizations.
\begin{proposition}\cite{Lam2016}
	The space of infinitesimal conformal deformations of a closed triangulated surface of genus $g$ in $\field{R}^3$ has dimension greater or equal to $ |V|+6-6g$. The inequality is strict if and only if the surface is isothermic.
\end{proposition}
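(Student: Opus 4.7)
The plan is to frame the conformality constraint as a linear system on deformations, apply rank-nullity, and then recognize that the failure of surjectivity of a certain map is precisely the isothermic condition.

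First I would pin down the count. For a closed triangulated surface of genus $g$, combining $|V|-|E|+|F|=2-2g$ with $2|E|=3|F|$ gives $|E|=3|V|-6+6g$. Next I introduce the two linear maps underlying the problem: $L:\mathbb{R}^{3|V|}\to\mathbb{R}^{|E|}$ with $L(\dot f)_{ij}:=\langle\dot f_j-\dot f_i,f_j-f_i\rangle$, and $M:\mathbb{R}^{|V|}\to\mathbb{R}^{|E|}$ with $M(u)_{ij}:=\tfrac{u_i+u_j}{2}|f_j-f_i|^2$. By definition, $\dot f$ is infinitesimally conformal iff $L(\dot f)\in\mathrm{Im}(M)$. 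A single triangle $\{ijk\}$ forces $u_i=u_j=u_k=0$ whenever $u_i+u_j=u_j+u_k=u_k+u_i=0$, so $M$ is injective and $\dim\mathrm{Im}(M)=|V|$. Writing $\Psi:=\pi\circ L$ with $\pi:\mathbb{R}^{|E|}\to\mathbb{R}^{|E|}/\mathrm{Im}(M)$, the space $C$ of infinitesimal conformal deformations equals $\ker\Psi$, so
\[
\dim C \;=\; 3|V|-\mathrm{rank}(\Psi)\;\geq\;3|V|-(|E|-|V|)\;=\;|V|+6-6g,
\]
which proves the inequality.

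For the equality criterion, strict inequality is equivalent to $\Psi$ failing to be surjective, hence to the existence of a nonzero functional on $\mathbb{R}^{|E|}$ that simultaneously annihilates $\mathrm{Im}(L)$ and $\mathrm{Im}(M)$. Represent such a functional by $k:E\to\mathbb{R}$ via the standard pairing. A summation-by-parts (grouping each bilinear pairing by vertex stars) shows that $k\perp\mathrm{Im}(M)$ amounts to $\sum_{j\sim i}k_{ij}|f_j-f_i|^2=0$ for every vertex $i$, while $k\perp\mathrm{Im}(L)$ amounts to $\sum_{j\sim i}k_{ij}(f_j-f_i)=0$ for every vertex $i$. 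These are exactly the two defining identities of an isothermic realization in the definition cited above. Consequently $\dim C>|V|+6-6g$ if and only if such a nonzero $k$ exists, i.e.\ if and only if $f$ is isothermic.

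The only real work is the summation-by-parts step, which is a direct reorganization: $\sum_{\{ij\}}k_{ij}\langle\dot f_j-\dot f_i,f_j-f_i\rangle$ rewrites as $\sum_i\langle\dot f_i,\,-\sum_{j\sim i}k_{ij}(f_j-f_i)\rangle$, and similarly $\sum_{\{ij\}}k_{ij}\tfrac{u_i+u_j}{2}|f_j-f_i|^2$ rewrites as $\tfrac12\sum_i u_i\sum_{j\sim i}k_{ij}|f_j-f_i|^2$. Once this bookkeeping is in hand, the rest is pure linear algebra together with the triangulation identity $|E|=3|V|-6+6g$; in particular there is no geometric obstacle beyond correctly identifying the cokernel of $\Psi$ with the isothermic balance laws.
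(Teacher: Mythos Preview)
Your argument is correct. The paper does not give its own proof of this proposition here; it simply cites \cite{Lam2016}. The only in-paper traces are the dimension counts in the introduction and in the proof of Proposition~\ref{prop:givenu}, which use exactly your setup: the conformal equivalence classes form a space of dimension $|E|-|V|$, so the constraint map has target of that dimension and rank--nullity gives $\dim C\ge 3|V|-(|E|-|V|)=|V|+6-6g$. Your treatment of the strict-inequality case is the natural duality step: identifying the cokernel of $\Psi$ with functionals $k:E\to\mathbb{R}$ annihilating both $\mathrm{Im}(L)$ and $\mathrm{Im}(M)$, and verifying by summation-by-parts that these are precisely the isothermic balance laws in the cited definition. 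This is essentially the argument one expects from \cite{Lam2016}; one small point worth stating explicitly is that the definition of isothermic requires $k\not\equiv 0$, which is exactly what ``nonzero functional'' gives you.
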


\begin{proposition}\cite{Lam2016}
	The class of isothermic surfaces is M\"{o}bius invariant.
\end{proposition}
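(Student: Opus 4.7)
The plan is to verify the isothermic condition under each generator of the Möbius group. Since every Möbius transformation of $\mathbb{R}^n$ is a composition of Euclidean translations, dilations, and the inversion $\Phi(x)=x/|x|^2$ in the unit sphere, it suffices to check that the defining pair of conditions
\[
\sum_j k_{ij} |f_j - f_i|^2 = 0, \qquad \sum_j k_{ij} (f_j - f_i) = 0
\]
can be transported along each of these three maps by producing, for the image realization $\tilde f = \Phi \circ f$, an appropriate edge weight $\tilde k:E\to \field{R}$.

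Translations and dilations are trivial: the edge differences $f_j-f_i$ only rescale by a global constant, so $\tilde k = k$ (rescaled appropriately) works. The substantive step is the inversion. Motivated by the identity $|\Phi(f_j)-\Phi(f_i)|^2 = |f_j-f_i|^2/(|f_i|^2|f_j|^2)$ used in Section~\ref{sec:3}, I would try the natural candidate
\[
\tilde k_{ij} := |f_i|^2\,|f_j|^2\, k_{ij}.
\]
With this choice, the scalar condition for $\tilde f$ is immediate, since the factors $|f_i|^2|f_j|^2$ cancel exactly against the denominator of the transformed squared length.

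For the vector condition, I would use the algebraic identity
\[
|f_i|^2 f_j - |f_j|^2 f_i = |f_i|^2 (f_j - f_i) - 2\langle f_i, f_j - f_i\rangle\, f_i - |f_j - f_i|^2\, f_i,
\]
obtained by expanding $|f_j|^2 = |f_i + (f_j-f_i)|^2$. Summing against $k_{ij}$ at a fixed vertex $i$, the first term gives $|f_i|^2\sum_j k_{ij}(f_j-f_i) = 0$ by the vector condition, the third term vanishes by the scalar condition, and the middle term becomes $2\,\langle f_i,\sum_j k_{ij}(f_j-f_i)\rangle\, f_i = 0$ again by the vector condition. Thus $\sum_j \tilde k_{ij}(\tilde f_j - \tilde f_i) = 0$.

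The main obstacle is genuinely bookkeeping rather than structural: one must guess the right transformation rule $\tilde k = |f_i|^2|f_j|^2 k$ and then organize the expansion so that both isothermic conditions for $f$ feed into the vector condition for $\tilde f$. A minor technical point is the case when some vertex $f_i$ coincides with the center of the inversion; one handles this by first composing with a translation so that the inversion center avoids the vertices of $f$, which is always possible for any given non-degenerate realization.
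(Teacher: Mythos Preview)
Your argument is correct. The paper itself does not supply a proof of this proposition; it merely cites it from \cite{Lam2016}, so there is no in-paper proof to compare against. Your direct verification---checking Möbius generators and, for the inversion, taking $\tilde k_{ij}=|f_i|^2|f_j|^2 k_{ij}$ so that the scalar condition is immediate and the vector condition follows from the expansion of $|f_i|^2 f_j-|f_j|^2 f_i$---is exactly the kind of elementary computation one would expect, and each step checks out. The remark about translating to avoid placing a vertex at the inversion center is appropriate; you might also note that this guarantees $\tilde k$ does not vanish identically, so the non-triviality of the isothermic datum is preserved.
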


Jessen's orthogonal icosahedron in Example \ref{example:jessen} and Bricard's octahedron in Example \ref{example:Bricard} are isothermic surfaces.

For a simply connected surface in the smooth theory, $\dim(\Ker\D)$ is M\"{o}bius invariant \cite[Lemma 26]{Richter1997}. We conjecture that the discrete analogue of this statement holds as well:
\begin{conjecture}
	For a simply connected triangulated surface in space, the nullity of the discrete Dirac operator $\dim(\Ker\D)$ is invariant under M\"{o}bius transformations.
\end{conjecture}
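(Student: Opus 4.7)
The plan is to construct, for each M\"obius transformation $\Phi$ of $\mathbb{R}^3 \cup \{\infty\}$, a linear isomorphism $\Psi: \Ker\D_f \to \Ker\D_{\Phi \circ f}$. Recall from Definition~\ref{def:dirac} and Theorem~\ref{thm:uZ} that $\Ker\D$ consists, modulo translations (which correspond to $(u,Z) = (0,0)$), of infinitesimal conformal deformations $\dot{f}$ whose change in mean curvature half-density $\rho_i = \tfrac{1}{2}\sum_j \dot{\alpha}_{ij} |f_j - f_i|$ vanishes at every interior vertex. Since the M\"obius group is generated by the similarities of $\mathbb{R}^3$ together with the inversion $\Phi(x) = -x/|x|^2$, and since the former act equivariantly on every constituent of $\D$ (scale factors, rotation fields on faces, circumcenters, dihedral angles) and hence trivially preserve $\dim\Ker\D$, the task reduces to the case of inversion in the unit sphere.

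Given $(u, Z) \in \Ker\D_f$, Theorem~\ref{thm:uZ} produces an infinitesimal conformal deformation $\dot{f}$ of $f$ with scale factor $u$ and vanishing $\rho$. Lemma~\ref{lem:mobinf} then yields an infinitesimal conformal deformation $d\Phi(\dot{f})$ of $\tilde{f} := \Phi \circ f$, with scale factor $\tilde{u}$ determined explicitly from $u$ and $\dot{f}$. A second application of Theorem~\ref{thm:uZ}, now to $\tilde{f}$ with scale factor $\tilde{u}$, produces a unique face rotation field $\tilde{Z}$ satisfying the compatibility condition~\eqref{eq:imDD} for $\tilde{f}$; this defines $\Psi(u, Z) := (\tilde{u}, \tilde{Z})$. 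The map is linear, and since $\Phi$ is involutive, running the same construction with $\Phi$ and $\tilde{f}$ gives a two-sided inverse. Hence $\Psi$ is an isomorphism from the space of infinitesimal conformal deformations of $f$ (modulo translations) onto that of $\tilde{f}$.

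The crux, and the main obstacle, is to show that $\Psi$ preserves the extra kernel condition $\rho \equiv 0$. In the smooth theory this is precisely the content of the M\"obius covariance of the quaternionic Dirac operator: under a M\"obius transformation of the ambient space, $D$ transforms as $D \mapsto \lambda\, D\, \mu$ for nowhere-vanishing multipliers $\lambda, \mu$, and the invariance of $\dim \Ker D$ follows. The discrete statement one would aim to prove is that for each interior vertex $i$ there exists a positive scalar $\lambda_i$, depending only on $f_i$ and its star, such that $\tilde{\rho}_i = \lambda_i \rho_i$. This would in principle reduce to a direct but intricate calculation: expand dihedral angles, edge lengths, and their variations on $\tilde{f}$ using $\Phi(x) = -x/|x|^2$, combine with the explicit form of $d\Phi(\dot{f})$ from Lemma~\ref{lem:mobinf}, and collect terms at each vertex. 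The challenge is that edge lengths, dihedral angles, and their infinitesimal changes all mix under inversion in a way that the cancellations giving vertex-wise proportionality are not manifest. A conceptually cleaner route may instead come through Proposition~\ref{thm:isoconf}: lift $f$ and $\tilde{f}$ via stereographic projection to $S^3 \subset \mathbb{R}^4$, observe that a M\"obius transformation of $\mathbb{R}^3$ is realised on this lift by an element of $O(4,1)$ in the Lorentz model of the M\"obius group, and then characterise $\Ker\D$ intrinsically among all infinitesimal isometric deformations of the lifted surface (plausibly through a Schl\"afli-type identity) so that its dimension is manifestly $O(4,1)$-invariant.
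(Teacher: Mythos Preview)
The statement is listed in the paper as a \emph{conjecture}; the paper offers no proof and explicitly leaves it open, citing only the smooth analogue. There is thus no paper argument to compare against---your proposal is an attempt at an open problem.

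As a proof, the proposal has a genuine gap, which you yourself identify. The reduction to inversion and the transport of infinitesimal conformal deformations via Lemma~\ref{lem:mobinf} are correct but recover only what is already known, namely that the space of \emph{all} infinitesimal conformal deformations is M\"obius invariant. The entire content of the conjecture is that the subspace cut out by $\rho\equiv 0$ is preserved, and this you do not establish. Your hoped-for relation $\tilde\rho_i=\lambda_i\rho_i$ is moreover stronger than needed: any invertible linear relation $\tilde\rho=A\rho$ would suffice, and nothing in the paper's framework suggests $A$ should be diagonal. The alternative route via Proposition~\ref{thm:isoconf} is an interesting reformulation, but characterising $\rho\equiv 0$ as an $O(4,1)$-invariant condition on the spherical lift is itself the hard step, not a bypass of it.

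There is also a technical slip in your definition of $\Psi$. A pair $(u,Z)$ with $U=0$ determines $\dot f$ only up to an additive constant, but under inversion $d\Phi$ carries a constant vector field on $f$ to a nontrivial infinitesimal M\"obius transformation of $\tilde f$, which has nonzero $(\tilde u,\tilde Z)$-data. Consequently $\Psi(u,Z)=(\tilde u,\tilde Z)$ depends on the choice of lift $\dot f$, so $\Psi$ is not well-defined as a map on $(u,Z)$-pairs. The natural bijection lives at the level of the deformations $\dot f$ themselves, and your ``two-sided inverse'' argument should be phrased there. This does not by itself obstruct the conjecture, but it does mean that even the framing of your map needs repair before the missing step can be addressed.
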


It is interesting to know if we can extend Corollary \ref{cor:givenrho} to triangulated spheres of general combinatorics as like as Gluck's theorem \cite{Gluck1975}. There are two important ingredients to Gluck's theorem. One is Steinitz' theorem that every triangulated sphere admits a strictly convex realization into $\field{R}^3$. The other is Dehn's theorem that all convex realizations are infinitesimally rigid. 

\begin{conjecture} 
	Every abstract triangulated sphere admits a realization in $\mathbb{R}^3$ with $\dim \Ker \D = 4$, which means it does not possess a non-trivial infinitesimal conformal deformation with vanishing change in mean curvature half-density. Hence for almost all triangulated spheres in $\mathbb{R}^3$, their infinitesimal conformal deformations are exactly parametrized by functions $\rho:V \to \field{R}$ with $\sum_i \rho_i=0$ as the change in mean curvature half density.
\end{conjecture}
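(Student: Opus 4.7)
The plan mirrors Gluck's proof of generic infinitesimal rigidity, with the discrete Dirac operator $\D$ playing the role of the rigidity matrix. Two ingredients are needed: exhibit a single realization $f_0$ of the given abstract triangulated sphere at which $\dim\Ker\D(f_0)=4$, and promote this pointwise information to an open dense condition by an algebraic-variety argument; the second clause of the conjecture then follows from Theorem \ref{thm:givenrho}. For the existence of $f_0$, I would apply Steinitz's theorem to obtain a strictly convex realization of the given combinatorial sphere in $\mathbb{R}^3$. The whole burden of the proof is then concentrated on showing $\dim\Ker\D(f_0)=4$ for this convex $f_0$, a statement which should be viewed as the ``Dirac analogue'' of Dehn's rigidity theorem.

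Unpacking via Theorem \ref{thm:uZ}, this reduces to showing that any infinitesimal conformal deformation $\dot f$ of a strictly convex $f_0$ with $\dot\alpha\equiv 0$ on every edge is induced by a similarity of $\mathbb{R}^3$, that is, $u$ is constant and $Z$ is constant. Two plausible routes: (i) compose with a stereographic projection $\Phi$ to apply Proposition \ref{thm:isoconf}, lifting to an infinitesimal isometric deformation of the inscribed triangulated surface $\iota\circ\Phi\circ f_0\subset S^3\subset\mathbb{R}^4$ with preserved dihedral angles, and attempt a Cauchy--Alexandrov sign-counting argument adapted to the inscribed setting; (ii) work directly with $\D$, using the final corollary of Section~\ref{sec:8} (which expresses $\D\D^*$ as essentially a cotangent Laplacian) together with a Bobenko--Izmestiev-style convex-variational identity to show the relevant quadratic form is positive definite modulo the similarity subspace. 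In either route, convexity of $f_0$ should enter through strict positivity of dihedral angles and of discrete Gaussian curvature at each vertex.

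With the single good realization in hand, Gluck's argument finishes quickly. The entries of any matrix representation of $\D$ depend polynomially (up to a common denominator) on the coordinates of $f$, so $\dim\Ker\D>4$ is the simultaneous vanishing of a finite collection of minors and defines a real algebraic subvariety $\mathcal V\subset\mathbb{R}^{3|V|}$. The realization $f_0$ witnesses $f_0\notin\mathcal V$, so $\mathcal V$ is proper and its complement is open and dense; combining with Theorem \ref{thm:givenrho} yields the second clause.

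The hard part is the convex rigidity statement $\dim\Ker\D(f_0)=4$. Classical Cauchy--Dehn sign counting controls only edge-length-preserving deformations, whereas here the scale factor $u$ contributes an intrinsic degree of freedom that is invisible to those arguments, and the constraint $\dot\alpha\equiv 0$ couples $u$ with the rotation field $Z$ through the compatibility equation \eqref{eq:imDD} in a way that has no straightforward combinatorial reduction. A successful proof will probably require a genuinely new ingredient---most likely a convex-variational positivity identity for the Dirichlet-type energy naturally associated with $\D$ on convex $f_0$, echoing the smooth Bonnet rigidity of convex surfaces rather than Cauchy's classical rigidity for convex polyhedra.
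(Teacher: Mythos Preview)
This statement is presented in the paper as a \emph{conjecture}, not a theorem; the paper gives no proof. What it does give, in the paragraph immediately preceding the conjecture, is exactly the strategy you outline: mimic Gluck's argument, using Steinitz's theorem to produce a convex realization and then needing a Dehn-type statement that convex realizations satisfy $\dim\Ker\D=4$. So your proposal is not an alternative proof but a faithful expansion of the paper's own heuristic, and you correctly identify that the ``Dirac analogue of Dehn'' is the missing piece. That piece remains open; your two suggested routes are reasonable speculation but neither is carried out, and the paper does not claim to know how to do it either.

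One substantive error in your unpacking: $\dim\Ker\D=4$ is \emph{not} equivalent to the statement that every infinitesimal conformal deformation with $\dot\alpha\equiv 0$ on all edges is a similarity. The kernel condition $\D(u,Z)=(0,0)$ says that $U\equiv 0$ (the compatibility \eqref{eq:imDD}, so $(u,Z)$ comes from a genuine conformal deformation) \emph{and} $\rho\equiv 0$, where $\rho_i=\tfrac12\sum_j\dot\alpha_{ij}|f_j-f_i|$. Vanishing of $\rho$ at every vertex is strictly weaker than $\dot\alpha\equiv 0$ edgewise; there can be nonzero $\dot\alpha$'s that cancel in each vertex sum. Thus the rigidity statement you must prove is harder than the one you wrote: you must rule out conformal deformations with vanishing change in mean curvature half-density, not merely those preserving all dihedral angles. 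This distinction matters for both of your proposed routes---a Cauchy-style sign argument on $\dot\alpha$ does not obviously control $\rho$, and the variational approach must target the correct functional.

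The algebraic-variety step (open dense complement once a single good realization is found) is already recorded in the paper just before Corollary~\ref{cor:givenrho}, so that part of your plan is unproblematic.
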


\bibliographystyle{siam}
\bibliography{conformalspace}   


\end{document}